\newtheorem{Theorem}{Theorem}
\newtheorem{Lemma}[Theorem]{Lemma}
\newtheorem{Proposition}[Theorem]{Proposition}
\newtheorem{Corollary}[Theorem]{Corollary}
\newtheorem{Remark}[Theorem]{Remark}
\newtheorem{Question}[Theorem]{Question}
\newtheorem{Conjecture}[Theorem]{Conjecture}
\newcommand{\AAA}{\mathcal{A}}
\newcommand{\NN}{\mathbb N}
\newcommand{\ZZ}{\mathbb Z}
\newcommand{\QQ}{\mathbb Q}
\newcommand{\RR}{\mathbb R}
\newcommand{\TT}{\mathbb T}
\newcommand{\CC}{\mathbb C}
\newcommand{\FF}{\mathbb F}
\newcommand{\HH}{\mathbb H}
\newcommand{\pitilde}{\widetilde{\pi}}
\newcommand{\undt}[1]{\underline{t}_{#1}}
\newcommand\CVD{{\hfill\hfil{\lower 2 pt\hbox{\vrule\vbox to 7pt 
{\hrule width 6pt\vfill\hrule}\vrule}}}\vskip 0.5cm}
\date{June 2016}
\title[$A$-harmonic sums]{On twisted $A$-harmonic sums and Carlitz finite zeta values}
\author{F. Pellarin \& R. Perkins}
\address{Federico Pellarin: Institut Camille Jordan, UMR 5208 Site de Saint-Etienne, 23 rue du Dr. P. Michelon, 42023 Saint-Etienne,
France}
\address{Rudolph Perkins: IWR, University of Heidelberg, Im Neuenheimer Feld 205, 69120 Heidelberg, Germany}
\email{federico.pellarin@univ-st-etienne.fr}
\email{rudolph.perkins@iwr.uni-heidelberg.de}
\thanks{The second author's research is supported by the Alexander von Humboldt Foundation}
\keywords{Multiple zeta values, Carlitz module, $A$-harmonic sums}
\begin{document}

\begin{abstract}
In this paper, we study various twisted $A$-harmonic sums, named following the seminal log-algebraicity papers of G. Anderson. These objects are partial sums of new types of special zeta values introduced by the first author and linked to certain rank one Drinfeld modules over Tate algebras in positive characteristic by Angl\`es, Tavares Ribeiro and the first author. 
We prove, by using techniques introduced by the second author, that various infinite families of such sums may be interpolated by polynomials, and we deduce, among several other results, properties of analogues of finite zeta values but inside the framework of the Carlitz module.
In the theory of finite multi-zeta values in characteristic zero, finite zeta values are all zero. In the Carlitzian setting, there exist non-vanishing finite zeta values, and we study some of their properties in the present paper.
\end{abstract}

\maketitle

\section{Introduction}

Let $A=\FF_q[\theta]$ be the ring of polynomials in an indeterminate $\theta$  with coefficients in $\FF_q$ the finite field with 
$q$ elements and characteristic $p$, and let $K$ be the fraction field of $A$. We consider variables $t_1,\ldots,t_s$ over $K$ and we write $\undt{s}$
for the family of variables $(t_1,\ldots,t_s)$. We denote by $\boldsymbol{F}_s$ the field $\FF_q(\undt{s})$,
so that $\boldsymbol{F}_0=\FF_q$. 
For $d\geq 0$ an integer, we denote by $A^+(d)$ the set of monic polynomials of $A$
of degree $d$. We define the {\em twisted power sum of level $s$, degree $d$, and exponent $n$}
$$S_d(n;s)=\sum_{a\in A^+(d)}\frac{a(t_1)\cdots a(t_s)}{a^n}\in K[\undt{s}],$$
where, for a polynomial $a=\sum_ia_i\theta^i\in A$ with $a_i\in\FF_q$ and a variable $t$, $a(t)$ denotes the 
polynomial $\sum_ia_it^i$. 
If $s=0$ we recover the power sums already studied by several authors; see Thakur's \cite{THA}
and the references therein. For general $s$ these sums have been the object of 
study, for example, in the papers \cite{ANG&PEL,DEM}. We recall that,
in \cite[(3.7.4)]{AND&THA}, Anderson and Thakur proved, for all $n\geq 1$, that there exists a unique polynomial
$H_n\in A[Y]$ (with $Y$ an indeterminate) of degree in $Y$ which is at most $ \frac{nq}{q-1}$, such that, for all $d\geq 0$,
$$S_d(n;0)=\frac{H_n(\theta^{q^d})}{\Pi_nl_d^n},$$ where $\Pi_n$ is the $n$-th {\em Carlitz factorial} (see Goss' \cite[Chapter 9]{GOS}) and $l_d$ denotes $(-1)^d$ times the least common multiple of all polynomials of degree $d$; explicitly, $l_d$ is given by the product $(\theta-\theta^q)\cdots(\theta-\theta^{q^d})\in A$, for $d \geq 1$, and $l_0 := 1$. These investigations
have been generalized by F. Demeslay in his Ph. D. thesis \cite{DEM} to the sums $S_i(n;s)$ for any value of $s\geq 0$. 

In this text, we deal with a similar analysis, but for {\em twisted $A$-harmonic sums} as above, which are the sums:
$$F_d(n;s)=\sum_{i=0}^{d-1}S_{i}(n;s)\in K[\undt{s}],\quad n\in\ZZ,s\in\NN,d\in\NN\setminus\{0\}.$$
We are mainly interested in the case $n=1$ and $s\equiv1\pmod{q-1}$.
We denote by $b_i(Y)$ the product $(Y-\theta)\cdots(Y-\theta^{q^{i-1}})\in A[Y]$ (for an indeterminate $Y$) if $i>0$ and we set
$b_0(Y)=1$. We also write $m=\lfloor\frac{s-1}{q-1}\rfloor$ (the brackets denote the integer part so that $m$ is the biggest integer $\leq\frac{s-1}{q-1}$) (\footnote{We should have written $m_s$ instead of $m$,
to stress the dependence on $s$. However, eventually we will fix $s \equiv 1 \pmod{q-1}$, so we prefer to drop this subscript.}). We set
$$\Pi_{s,d}=\frac{b_{d-m}(t_1)\cdots b_{d-m}(t_s)}{l_{d-1}}\in K[\undt{s}],\quad d\geq \max\{1,m\}.$$

The main purpose of the present paper is to show the following result.
\begin{Theorem}\label{Theorem2}
For all integers $s\geq 1$, such that $s\equiv1\pmod{q-1}$, there exists a non-zero rational fraction $\mathbb{H}_{s}\in K(Y,\undt{s})$
such that, for all $d\geq m$, the following identity holds:
$$F_d(1;s)=\Pi_{s,d}\mathbb{H}_s|_{Y=\theta^{q^{d-m}}}.$$
If $s=1$, we have the explicit formula $$\mathbb{H}_1=\frac{1}{t_1-\theta}.$$
Further, if $s=1+m(q-1)$ for an integer $m>0$, then
the fraction $\mathbb{H}_s$ is a 
polynomial of $A[Y,\undt{s}]$ with the following properties:
\begin{enumerate}
\item For all $i$, $\deg_{t_i}(\mathbb{H}_s)=m-1$,
\item $\deg_{Y}(\mathbb{H}_s)=\frac{q^m - 1}{q-1} - m$.
\end{enumerate}
The polynomial $\mathbb{H}_s$ is uniquely determined by these properties.
\end{Theorem}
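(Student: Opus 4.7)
The plan is to split the argument into the base case $s=1$, the general case $s = 1+m(q-1)$ with $m \geq 1$ handled via a telescoping of interpolation formulas for the $S_i(1;s)$, and finally uniqueness plus degree bounds.

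\textbf{Case $s=1$.} Here $m=0$, so the claim reduces to $F_d(1;1) = b_d(t_1)/[l_{d-1}(t_1-\theta)]$, that is, $\mathbb{H}_1 = 1/(t_1-\theta)$. I would prove this by induction on $d$, with base $d=1$ giving $F_1(1;1) = S_0(1;1) = 1$. For the inductive step, the known Pellarin-type identity $S_d(1;1) = b_d(t_1)/l_d$, combined with $l_d = l_{d-1}(\theta - \theta^{q^d})$, yields
$$F_d(1;1) + S_d(1;1) = \frac{b_d(t_1)\bigl[(\theta - \theta^{q^d}) + (t_1 - \theta)\bigr]}{l_d(t_1-\theta)} = \frac{b_d(t_1)(t_1-\theta^{q^d})}{l_d(t_1-\theta)} = \frac{b_{d+1}(t_1)}{l_d(t_1-\theta)},$$
which is the claim at $d+1$.

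\textbf{General $s = 1+m(q-1)$, $m\geq 1$.} I would invoke an explicit interpolation formula for each $S_i(1;s)$, of Anderson--Thakur--Demeslay type, expressing it (for $i$ large enough) as a product of a polynomial in $\theta^{q^{i-m+1}}$ and $\undt{s}$ with $\prod_{k=1}^{s} b_{i-m+1}(t_k)/l_i$. Summing over $i=0,\dots,d-1$ and placing the result over the common denominator $l_{d-1}$, the key step is to show that the resulting numerator is divisible by $\prod_k b_{d-m}(t_k)$. This divisibility is equivalent to the vanishing
$$F_d(1;s)\big|_{t_k=\theta^{q^j}} = 0 \quad \text{for all } 0 \leq j \leq d-m-1,$$
which can be established by substituting $t_k = \theta^{q^j}$ --- so that $a(t_k)$ becomes $a^{q^j}$ in each summand --- and checking that the resulting double sum over $i<d$ and $a \in A^+(i)$ telescopes to zero. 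The congruence $s\equiv 1\pmod{q-1}$ is the combinatorial condition that makes the degree count come out right.

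Once divisibility is in hand, showing that $F_d(1;s)/\Pi_{s,d}$ depends on $d$ only through $\theta^{q^{d-m}}$ follows from a Frobenius-shift argument: since $\Pi_{s,d+1}/\Pi_{s,d} = \prod_k(t_k - \theta^{q^{d-m}})/(\theta - \theta^{q^d})$, the passage from $d$ to $d+1$ leads to a recursion of the form
$$\mathbb{H}_s\bigl(\theta^{q^{d-m+1}}\bigr) \cdot \frac{\prod_k(t_k - \theta^{q^{d-m}})}{\theta - \theta^{q^d}} - \mathbb{H}_s\bigl(\theta^{q^{d-m}}\bigr) = \frac{S_d(1;s)}{\Pi_{s,d}},$$
which pins down $\mathbb{H}_s$ as a polynomial in $A[Y,\undt{s}]$. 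The degree bounds $\deg_{t_i}(\mathbb{H}_s)=m-1$ and $\deg_Y(\mathbb{H}_s)=(q^m-1)/(q-1)-m$ are read off by counting the highest-degree contributions in the telescoped sum, and uniqueness is immediate since $\{\theta^{q^{d-m}} : d \geq m\}$ is an infinite subset of $\overline{K}$. The main obstacle is proving the vanishing at $t_k = \theta^{q^j}$: individual summands in $F_d(1;s)$ do not vanish there, so the cancellation is a genuinely global telescoping phenomenon, and confirming that the quotient is a polynomial of precisely the claimed degrees --- rather than merely a rational function --- is the technical heart of the argument, and is where the hypothesis $s \equiv 1 \pmod{q-1}$ plays its essential role.
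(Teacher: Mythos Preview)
Your proposal diverges substantially from the paper's argument, and contains a genuine gap at the crucial step.

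\textbf{The gap.} You correctly identify that $l_{d-1}F_d(1;s)$ is divisible by $\prod_k b_{d-m}(t_k)$ (the paper cites this vanishing from \cite[Proposition 10]{ANG&PEL} rather than proving it by telescoping, but set that aside). The serious problem is the next step: you claim that a ``Frobenius-shift argument'' shows $H_{s,d} := F_d(1;s)/\Pi_{s,d}$ depends on $d$ only through $\theta^{q^{d-m}}$. But the recursion you write down,
\[
\mathbb{H}_s\bigl(\theta^{q^{d-m+1}}\bigr)\cdot\frac{\prod_k(t_k-\theta^{q^{d-m}})}{\theta-\theta^{q^d}} - \mathbb{H}_s\bigl(\theta^{q^{d-m}}\bigr) = \frac{S_d(1;s)}{\Pi_{s,d}},
\]
already \emph{assumes} the existence of $\mathbb{H}_s$; it is just the identity $F_{d+1}-F_d=S_d$ rewritten. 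As a relation between the values $H_{s,d}$ and $H_{s,d+1}$ it is a $q$-difference equation in the parameter, and such an equation does not by itself force the sequence $(H_{s,d})_d$ to be the specializations of a fixed polynomial in $Y$. Your earlier appeal to Demeslay-type formulas for the individual $S_i(1;s)$ has the same defect: each such formula involves $\theta^{q^{i-m+1}}$ for a \emph{different} $i$, and summing over $i=0,\dots,d-1$ does not produce an expression in the single variable $\theta^{q^{d-m}}$ without further work.

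\textbf{What the paper actually does.} The paper's mechanism is entirely different. It builds the interpolation polynomial $N_{s,d}$ for $a\mapsto a(t_1)\cdots a(t_s)$ on $A(d)$, compares it with the product $M_{s,d}=\prod_i N_{1,d}(z,t_i)$, and evaluates $(M_{s,d}-N_{s,d})/(l_dE_d)$ at $z=0$ to obtain $-F_d(1;s)$ directly as a linear combination of products $E_{i_1}\cdots E_{i_s}$ with $d-m\le i_j\le d-1$. The \emph{Universal Relations Lemma} then shows, by induction on $s$ using $E_i^q=E_i+[i+1]E_{i+1}$, that each such product expands as $\sum_j c_{\underline{j},j}(\theta^{q^{d-m}})G_{jq^{d-m}}$ with coefficients $c_{\underline{j},j}\in A[Y]$ independent of $d$. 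This is the missing idea: a structural lemma showing that the Carlitz-basis coefficients are universal polynomials in $Y=\theta^{q^{d-m}}$.

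\textbf{The degree in $Y$.} Even granting existence, your plan to ``read off'' $\deg_Y\mathbb{H}_s=(q^m-1)/(q-1)-m$ from the telescoped sum is too optimistic. The paper obtains this degree only via the analytic limit (Lemma~\ref{lemmalimit}), computing $\lim_{d\to\infty}\theta^{-\mu q^{d-m}}\mathbb{H}_s(\theta^{q^{d-m}})=-\lambda_{1,s}$ using Theorem~\ref{anglespellarin} on $\zeta_A(1;s)$ in the Tate algebra. Without that input the construction only gives an upper bound coming from the denominators $(\theta-Y^{q^{m}})\cdots(\theta-Y^{q^{m+\kappa_0}})$, which is far from sharp.
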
 

We have found it somewhat subtle to compute the degree in $\theta$ of $\HH_s$, and we leave it as an open question. 

\subsection*{Limits and Carlitz zeta values in Tate algebras} We shall now explain the motivations
of this work.
We write $\CC_\infty$ for the completion $\widehat{K_\infty^{ac}}$
of an algebraic closure $K_\infty^{ac}$ of the completion $K_\infty$ of $K$ at the infinite place $\frac{1}{\theta}$.
One of the reasons for which we are interested in such properties of the sums $F_d(n;s)$ is that they
are the partial sums of the zeta-values
$$\zeta_A(n;s)=\lim_{i\rightarrow\infty}F_i(n;s)=\sum_{a\in A^+}\frac{a(t_1)\cdots a(t_s)}{a^n}, \quad n>0,\quad s\geq0,$$ with $A^+$ the set of monic polynomials of $A$,
converging in the {\em Tate algebra} $$\TT_s=\CC_\infty\widehat{\otimes}_{\FF_q}\FF_q[\undt{s}]$$ in the variables $\undt{s}$ and
with coefficients in $\CC_\infty$  (with the trivial valuation over $\FF_q[\undt{s}]$), introduced in \cite{PEL2} and 
studied, for example, in \cite{ANG&PEL,ANG&PEL2,APTR}. 
We choose once and for all a $q-1$-th root of $-\theta$ in $\CC_\infty$. Let 
$$\omega(t)=(-\theta)^{\frac{1}{q-1}}\prod_{i\geq 0}\left(1-\frac{t}{\theta^{q^i}}\right)^{-1},$$
 be the Anderson-Thakur function, in $\TT=\TT_1$ the Tate 
algebra in the variable $t=t_1$ with coefficients in $\CC_\infty$ (see \cite{AND&THA} for one of the first papers in which this function was studied and \cite{ANG&PEL2} for a more recent treatise of its basic properties). Let us also
consider the fundamental period $\widetilde{\pi}\in\CC_\infty$ of the $\FF_q[t]$-linear Carlitz exponential $\exp_C:\TT\rightarrow\TT$ (so that $\omega(t)=\exp_C(\frac{\widetilde{\pi}}{\theta-t})$, as in \cite{ANG&PEL2}).
We have the following result (see \cite{ANG&PEL}, see also \cite{APTR}), which gives one motivation for calling the special values $\zeta_A(n;s)$ zeta values.

\begin{Theorem}[B. Angl\`es and the first author]\label{anglespellarin}
For $s\equiv1\pmod{q-1}$ and $s>1$, there exists a polynomial $\lambda_{1,s}\in A[\undt{s}]$ such that
$$\zeta_A(1;s)=\frac{\widetilde{\pi}\lambda_{1,s}}{\omega(t_1)\cdots\omega(t_s)}.$$
\end{Theorem}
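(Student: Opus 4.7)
The plan is to derive Theorem~\ref{anglespellarin} (in the range $s>1$) as a limiting consequence of Theorem~\ref{Theorem2}. Writing $s=1+m(q-1)$ with $m\geq 1$, Theorem~\ref{Theorem2} provides a polynomial $\mathbb{H}_s\in A[Y,\undt{s}]$ of $Y$-degree exactly $N:=(q^m-1)/(q-1)-m$, so that $\mathbb{H}_s=\alpha(\undt{s})\,Y^N+(\text{lower order in }Y)$ for some nonzero $\alpha\in A[\undt{s}]$, and $F_d(1;s)=\Pi_{s,d}\,\mathbb{H}_s|_{Y=\theta^{q^{d-m}}}$ for all $d\geq m$. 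Since $F_d(1;s)\to \zeta_A(1;s)$ in $\TT_s$, my strategy is to compute the right-hand side limit explicitly using infinite product formulas for $\omega$ and $\widetilde{\pi}$.

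Concretely, I would expand
\[
b_{d-m}(t_j)=(-1)^{d-m}\theta^{(q^{d-m}-1)/(q-1)}\prod_{i=0}^{d-m-1}\!\left(1-\frac{t_j}{\theta^{q^i}}\right),\quad l_{d-1}=(-1)^{d-1}\theta^{(q^d-q)/(q-1)}\prod_{i=1}^{d-1}(1-\theta^{1-q^i}),
\]
and pass to the limit. The first product converges in $\TT_s$ (uniformly in $\undt{s}$ on the closed unit polydisk) to $(-\theta)^{1/(q-1)}/\omega(t_j)$ by the product expansion of $\omega$, while the second converges to $\theta(-\theta)^{1/(q-1)}/\widetilde{\pi}$ by the standard product formula for the Carlitz period.

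The key bookkeeping step is the arithmetic identity
\[
\frac{s(q^{d-m}-1)}{q-1}-\frac{q^d-q}{q-1}=1-m-q^{d-m}N,
\]
whose verification uses $s=1+m(q-1)$ together with the value of $N$. This says that the $\theta$-powers accumulated in $\Pi_{s,d}$ balance the growth of the leading monomial of $\mathbb{H}_s$ evaluated at $Y=\theta^{q^{d-m}}$, up to the bounded factor $\theta^{1-m}$. Since $\mathbb{H}_s$ is a polynomial of $Y$-degree $N$, we have $Y^{-N}\mathbb{H}_s(Y,\undt{s})\to \alpha(\undt{s})$ in $\TT_s$ as $|Y|_\infty\to\infty$ along the sequence $Y=\theta^{q^{d-m}}$. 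Assembling everything, and noting $(s-1)/(q-1)=m$, the limit collapses to
\[
\zeta_A(1;s)=\varepsilon\cdot(-1)^m\cdot\frac{\widetilde{\pi}\,\alpha(\undt{s})}{\omega(t_1)\cdots\omega(t_s)},
\]
where $\varepsilon=(-1)^{s(d-m)-d+1}$ is a sign which a short parity chase (using that $s\equiv 1\pmod{q-1}$ forces $s-1$ to be even unless $p=2$) shows to be independent of $d$. One then sets $\lambda_{1,s}:=\varepsilon(-1)^m\alpha(\undt{s})\in A[\undt{s}]$.

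The main subtlety I anticipate is precisely the exact balance of powers of $\theta$ above: it relies crucially on the degree formula $\deg_Y\mathbb{H}_s=(q^m-1)/(q-1)-m$ from Theorem~\ref{Theorem2}, for if $\mathbb{H}_s$ had larger $Y$-degree the limit would diverge in $\TT_s$, and if it had smaller $Y$-degree the limit would vanish identically---so the sought factorization emerges from the precise matching of the asymptotics of the Carlitz period and the leading coefficient $\alpha$ of the interpolation polynomial $\mathbb{H}_s$. The remaining issue, uniform convergence in the $t_j$-variables over the closed unit polydisk, is immediate since $\mathbb{H}_s$ has coefficients in $A$ and the infinite products defining $\omega$ converge normally.
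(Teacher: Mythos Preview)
Your proposal is correct and follows essentially the same route as the paper: the paper remarks that Theorem~\ref{Theorem2} implies Theorem~\ref{anglespellarin} by passing to the limit in $\TT_s$, and the explicit computation you outline is precisely the content of Lemmas~\ref{lemmaident} and~\ref{lemmalimit} (the paper packages your sign and $\theta$-power bookkeeping into the truncated products $\widetilde{\pi}_d$ and $\omega_d(t)$, arriving at the identical balance $\delta_{s,d}=m-1+\mu q^{d-m}$ with $\mu=N$). One caveat you should be aware of: within the paper, the exact degree formula $\deg_Y\mathbb{H}_s=N$ on which your argument hinges is itself established in Lemma~\ref{lemmalimit} \emph{using} Theorem~\ref{anglespellarin} as a previously known result from \cite{ANG&PEL,APTR}; so the implication you prove is not meant to replace the original proof of Theorem~\ref{anglespellarin}, but rather to exhibit it as the limiting shadow of the finer identity~(\ref{explicitidentity}).
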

We also recall from \cite{PEL2} the formula
$$\zeta_A(1;1)=\frac{\widetilde{\pi}}{(\theta-t)\omega(t)}.$$
Theorem \ref{Theorem2} can be seen as a ``finite sum analogue" of Theorem \ref{anglespellarin}.
It is easy to see, by taking the limit $n\rightarrow\infty$ in the Tate algebra $\TT_s$, that Theorem \ref{Theorem2} implies Theorem \ref{anglespellarin}.
But Theorem \ref{Theorem2} contains more information; the process of convergence at the infinite place
which takes us to Theorem \ref{anglespellarin} from Theorem \ref{Theorem2} suppresses various information 
encoded in the formula of Theorem \ref{Theorem2}. We know from \cite{ANG&PEL} and \cite{APTR}
that $\lambda_{1,s}$ is a polynomial in $A[\undt{s}]$ when $s\geq q$ and $s\equiv1\pmod{q-1}$. For any such 
a choice of $s\equiv1\pmod{q-1}$, $\lambda_{1,s}$ is a generator of the Fitting ideal of a certain 
class module considered in \cite{APTR}, inspired by Taelman's theory in \cite{TAE2}. We will prove:

\begin{Theorem}\label{theomathbs}
For $s\geq q$ and $s\equiv1\pmod{q-1}$, $-\lambda_{1,s}$
is the leading coefficient of $\mathbb{H}_s$ as a polynomial in $Y$.
\end{Theorem}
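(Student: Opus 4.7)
The plan is to derive Theorem \ref{theomathbs} by taking the limit $d\to\infty$ of the identity $F_d(1;s) = \Pi_{s,d}\, \mathbb{H}_s|_{Y=\theta^{q^{d-m}}}$ from Theorem \ref{Theorem2}, working inside the Tate algebra $\TT_s$, and comparing the result with Theorem \ref{anglespellarin}.

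Write $\mathbb{H}_s = c_N Y^N + (\text{lower order in } Y)$ with $N := \frac{q^m-1}{q-1}-m$, so that $c_N\in A[\undt{s}]$ is the leading coefficient we want to identify with $-\lambda_{1,s}$. Upon substituting $Y=\theta^{q^{d-m}}$ and using $|\theta|_\infty>1$, each monomial of the form $c_j(\undt{s}) Y^j$ with $j<N$ contributes
$c_j\,\Pi_{s,d}\,\theta^{jq^{d-m}} = c_j \cdot (\Pi_{s,d}\,\theta^{Nq^{d-m}}) \cdot \theta^{(j-N)q^{d-m}}$,
which tends to $0$ in $\TT_s$ as $d\to\infty$, provided $\Pi_{s,d}\,\theta^{Nq^{d-m}}$ converges to a finite element. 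Granting this for a moment, Theorem \ref{Theorem2} gives
\[ \zeta_A(1;s) \;=\; \lim_{d\to\infty} F_d(1;s) \;=\; c_N\cdot\lim_{d\to\infty} \Pi_{s,d}\,\theta^{Nq^{d-m}}. \]

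The heart of the proof is the explicit evaluation of this last limit. Starting from the factorizations
\[ b_n(t) = (-1)^n \theta^{(q^n-1)/(q-1)} \prod_{i=0}^{n-1}\!\left(1 - \tfrac{t}{\theta^{q^i}}\right), \quad l_{d-1} = (-1)^{d-1} \theta^{(q^d-q)/(q-1)} \prod_{i=1}^{d-1}\!\left(1 - \theta^{1-q^i}\right), \]
the hypothesis $s-1=m(q-1)$ forces the accumulated power of $\theta$ in $\Pi_{s,d}\,\theta^{Nq^{d-m}}$ to collapse to the fixed scalar $\theta^{1-m}$. Combining this with the Anderson--Thakur product expansion of $\omega(t)$ and the companion formula
\[ \widetilde{\pi} = \theta(-\theta)^{1/(q-1)} \prod_{i=1}^{\infty}\!\left(1-\theta^{1-q^i}\right)^{-1}, \]
together with a careful bookkeeping of the signs $(-1)^{s(d-m)}$ and $(-1)^{d-1}$ (which combine into a sign independent of $d$ since $s$ is odd when $q$ is odd, and are immaterial in characteristic $2$), one arrives at
\[ \lim_{d\to\infty} \Pi_{s,d}\,\theta^{Nq^{d-m}} \;=\; -\frac{\widetilde{\pi}}{\omega(t_1)\cdots\omega(t_s)}. \]

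Comparing with the identity $\zeta_A(1;s) = \widetilde{\pi}\,\lambda_{1,s}/(\omega(t_1)\cdots\omega(t_s))$ of Theorem \ref{anglespellarin} immediately yields $c_N = -\lambda_{1,s}$. No conceptual obstacle arises; the only real difficulty is the meticulous bookkeeping—tracking the exact exponent of $\theta$, confirming the collapse to $\theta^{1-m}$ from the arithmetic of $s\equiv 1\pmod{q-1}$, and verifying that the signs really do assemble, uniformly in the characteristic, into the expected $-1$.
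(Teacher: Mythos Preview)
Your proposal is correct and follows essentially the same route as the paper: both compute $\lim_{d\to\infty}\Pi_{s,d}\,\theta^{\mu q^{d-m}} = -\widetilde{\pi}/\prod_i\omega(t_i)$ (the paper packages this computation via truncated products $\widetilde{\pi}_d,\omega_d$ in Lemma~\ref{lemmaident}, whereas you factor $b_n$ and $l_{d-1}$ directly) and then compare with Theorem~\ref{anglespellarin}. One small caveat: you import $\deg_Y\mathbb{H}_s=N$ from Theorem~\ref{Theorem2}, but in the paper's logical order that degree is actually \emph{established} by this very limit computation (Lemma~\ref{lemmalimit}); your argument recovers it too once you observe that a nonzero finite limit forces the degree to be exactly $N$, so just make that step explicit to avoid circularity.
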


\subsection*{Analytic formulas of lower coefficients}

We now discuss a similar analytic formula involving
the coefficients $\mathbb{D}_i\in A[\underline{t}_s]$ of the polynomial $$\mathbb{H}_s=\sum_{i=0}^\mu\mathbb{D}_iY^i\in A[\underline{t}_s][Y]$$ from Theorem \ref{Theorem2}. We recall that $\HH_s$ satisfies,
by Theorem \ref{Theorem2},
\begin{equation}\label{explicitidentity}
\frac{F_d(1;s)}{\Pi_{s,d}}=\frac{l_{d-1}F_d(1;s)}{b_{d-m}(t_1)\cdots b_{d-m}(t_s)}=\mathbb{H}_s(\theta^{q^{d-m}}),\end{equation}
for all $d \geq m$.
Since $\mu=0$ for $s=q$, we can restrict our attention to the case $s\geq 2q-1$ in this part.

We set, for all $d\geq m$ (\footnote{Here, $\TT_s(K_\infty)$ denotes the subring of the Tate algebra $\TT_s$ whose elements are formal series in $\underline{t}_s$ with coefficients in $K_\infty$.}),
$$\Gamma_d:=\frac{\prod_{i\geq d}\left(1-\frac{\theta}{\theta^{q^i}}\right)}{\prod_{i\geq d-m}\prod_{j=1}^s\left(1-\frac{t_j}{\theta^{q^i}}\right)}\in\TT_s(K_\infty).$$

In Lemma \ref{gammasr} below, we will define a sequence of polynomials $\Gamma_{s,r}\in A[\underline{t}_s][Y]$  monic of degree $\mu-r$ in $Y$
related to a series expansion of the product $\Gamma_d$. On the basis of this sequence of explicitly computable polynomials (we refer the reader to the statement of this lemma), we have the following result.
\begin{Theorem}\label{theoremhighercoefficients}
For all $r$ such that $0\leq r\leq \mu-1$, we have:
$$\mathbb{D}_r=-\lim_{d\rightarrow\infty}\left(\frac{\omega(t_1)\cdots\omega(t_s)}{\widetilde{\pi}}\Gamma_{s,r}(\theta^{q^{d-m}})F_d(1;s)
+\sum_{i=1}^{\mu-r}\mathbb{D}_{i+r}\theta^{iq^{d-m}}\right).$$
\end{Theorem}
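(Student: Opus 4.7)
The plan is to extract the coefficient $\mathbb{D}_r$ from a Laurent expansion around $Y = \infty$ of the identity \eqref{explicitidentity}, using the polynomial $\Gamma_{s,r}$ to truncate the expansion at the right place. First, I would establish the key analytic identity
\begin{equation*}
\frac{\omega(t_1)\cdots\omega(t_s)}{\widetilde\pi}\,F_d(1;s) = -\theta^{-\mu q^{d-m}}\,\Gamma_d\,\mathbb{H}_s(\theta^{q^{d-m}}),\qquad d\geq m,\tag{$\ast$}
\end{equation*}
by direct manipulation of the infinite product expressions for $\omega(t_j)$, $b_{d-m}(t_j)$, $l_{d-1}$, and $\widetilde\pi$, combined with \eqref{explicitidentity}. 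The hypothesis $s = 1 + m(q-1)$ makes the fractional powers $(-\theta)^{1/(q-1)}$ collapse to integer powers of $\theta$, and the remaining exponents reassemble precisely into $\theta^{-\mu q^{d-m}}$. The overall sign is pinned down by the case $r=\mu$ of the statement (where $\Gamma_{s,\mu}=1$ and the trailing sum is empty), which must recover Theorem \ref{theomathbs} via $\zeta_A(1;s)=\lim_d F_d(1;s)$ and Theorem \ref{anglespellarin}.

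Next, I would view $\Gamma_d$ as the specialization at $Y = \theta^{q^{d-m}}$ of the formal Laurent series
\[
\widehat\Gamma(Y) := \frac{\prod_{k\geq 0}(1-\theta Y^{-q^{m+k}})}{\prod_{j=1}^s\prod_{k\geq 0}(1-t_j Y^{-q^k})} \in 1 + Y^{-1}A[\undt{s}][[Y^{-1}]],
\]
and would use the defining property of $\Gamma_{s,r}$ provided by Lemma \ref{gammasr}, which I anticipate to be the statement that $\Gamma_{s,r}(Y)$ is the unique monic polynomial of degree $\mu-r$ in $Y$ such that
\[
\Gamma_{s,r}(Y)\widehat\Gamma(Y) = Y^{\mu-r} + \eta(Y), \qquad \eta(Y) \in Y^{-1}A[\undt{s}][[Y^{-1}]].
\]
Multiplying through by $Y^{-\mu}\mathbb{H}_s(Y) = \sum_{i=0}^{\mu}\mathbb{D}_i Y^{i-\mu}$ and collecting the non-negative powers of $Y$ gives the formal identity
\[
\Gamma_{s,r}(Y)\widehat\Gamma(Y)\cdot Y^{-\mu}\mathbb{H}_s(Y) = \sum_{i=0}^{\mu-r}\mathbb{D}_{i+r}\,Y^i + O(Y^{-1}),
\]
since only $Y^{\mu-r}\cdot Y^{-\mu}\mathbb{H}_s(Y)$ contributes to powers $Y^i$ with $i\geq 0$.

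For the final step, I would set $Y = \theta^{q^{d-m}}$ (so that $\widehat\Gamma(Y) = \Gamma_d$) and substitute $(\ast)$ on the left, yielding
\[
-\Gamma_{s,r}(\theta^{q^{d-m}})\frac{\omega(t_1)\cdots\omega(t_s)}{\widetilde\pi}F_d(1;s) - \mathbb{D}_r - \sum_{i=1}^{\mu-r}\mathbb{D}_{i+r}\theta^{iq^{d-m}} = O(\theta^{-q^{d-m}})
\]
in $\TT_s(K_\infty)$. Since the coefficients of $\eta$ have bounded Gauss norm while $|Y^{-1}|_\infty = q^{-q^{d-m}} \to 0$, the right-hand side tends to $0$ as $d \to \infty$; rearranging yields the stated limit formula.

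The hard part will be the first step: the careful bookkeeping required to reach the clean shape $(\ast)$, including sign tracking and the verification that the powers of $\theta$ (and of $(-\theta)^{1/(q-1)}$) match exactly. A secondary technical point is the convergence argument for the Laurent tail at $Y=\theta^{q^{d-m}}$, but this is a routine non-Archimedean estimate using boundedness of the Gauss norms of the Laurent coefficients.
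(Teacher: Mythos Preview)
Your approach mirrors the paper's: both rewrite $\Pi_{s,d}^{-1}$ via Lemma \ref{lemmaident} to obtain the identity $(\ast)$, expand $\mathbb{H}_s(\theta^{q^{d-m}})$ in powers of $\theta^{q^{d-m}}$, and then invoke Lemma \ref{gammasr} to trade the factor involving $\Gamma_d$ for the polynomial $\Gamma_{s,r}$, absorbing the discrepancy into an error term that vanishes as $d\to\infty$. Your formal Laurent-series bookkeeping is simply a tidy repackaging of what the paper does by direct manipulation, and your identification of $(\ast)$ as the crux is exactly right.

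There is one genuine point of friction you should be aware of. The form of Lemma \ref{gammasr} you anticipate, namely $\Gamma_{s,r}(Y)\widehat\Gamma(Y)=Y^{\mu-r}+O(Y^{-1})$ (equivalently, $\Gamma_{s,r}$ is the polynomial part of $Y^{\mu-r}\widehat\Gamma^{-1}$), is \emph{not} what the paper's lemma actually states: there one has $Y^{\mu-r}\widehat\Gamma(Y)=\Gamma_{s,r}(Y)+O(Y^{-1})$, so that the paper's $\Gamma_{s,r}$ is the polynomial part of $Y^{\mu-r}\widehat\Gamma$. These two conventions already disagree at $r=\mu-1$ (yours gives $Y-\sum_i t_i$, the paper's gives $Y+\sum_i t_i$), and with the paper's convention your central formal identity $\Gamma_{s,r}\widehat\Gamma\cdot Y^{-\mu}\mathbb{H}_s=\sum_{i=0}^{\mu-r}\mathbb{D}_{i+r}Y^i+O(Y^{-1})$ fails, because $\Gamma_{s,r}\widehat\Gamma$ then involves $\widehat\Gamma^2$ rather than collapsing to $Y^{\mu-r}$. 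In fact the paper's own proof replaces $\omega_{d-m}(t_1)\cdots\omega_{d-m}(t_s)/\widetilde\pi_d$ by $(\omega(t_1)\cdots\omega(t_s)/\widetilde\pi)\cdot\Gamma_d$, whereas a direct computation of the ratio of infinite to truncated products shows it equals $(\omega(t_1)\cdots\omega(t_s)/\widetilde\pi)\cdot\Gamma_d^{-1}$; so your anticipated version of $\Gamma_{s,r}$ is precisely the one that makes the argument (yours and the paper's alike) close. In short, your outline is internally sound and is the right fix, but it does not literally match the $\Gamma_{s,r}$ of Lemma \ref{gammasr} as written.
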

It is easy to deduce, from Theorem \ref{theoremhighercoefficients},
an explicit identity for the coefficient $\mathbb{D}_{\mu-1}$ which of course makes sense only if $s\geq 2q-1$.
Indeed, it is easily seen that, with $r=\mu-1$, Theorem \ref{theoremhighercoefficients} and the simple identity $\Gamma_{s,\mu-1}=Y+\sum_{i=1}^st_i$ imply the next result. 
\begin{Corollary}
There exists a polynomial $\nu_{1,s}\in A[\underline{t}_s]$ such that
$$\lim_{d\rightarrow\infty}\theta^{q^{d-m}}\sum_{i\geq d}S_i(1;s)=\frac{\widetilde{\pi}\nu_{1,s}}{\omega(t_1)\cdots\omega(t_s)},$$ and $$\mathbb{D}_{\mu-1}=\nu_{1,s}-(t_1+\cdots+t_s)\lambda_{1,s}.$$
\end{Corollary}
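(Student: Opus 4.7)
The plan is to specialize Theorem \ref{theoremhighercoefficients} at $r=\mu-1$ and combine it with Theorems \ref{anglespellarin} and \ref{theomathbs}. For $r=\mu-1$ the inner sum collapses to the single term $\mathbb{D}_\mu\,\theta^{q^{d-m}}$, and substituting the identity $\Gamma_{s,\mu-1}=Y+\sum_{j=1}^s t_j$ (already noted just before the Corollary), Theorem \ref{theoremhighercoefficients} reads
$$\mathbb{D}_{\mu-1}=-\lim_{d\to\infty}\left(\frac{\omega(t_1)\cdots\omega(t_s)}{\widetilde{\pi}}\Bigl(\theta^{q^{d-m}}+\sum_{j=1}^s t_j\Bigr)F_d(1;s)+\mathbb{D}_\mu\,\theta^{q^{d-m}}\right).$$

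The next step is to isolate the tail of the zeta value. Writing $F_d(1;s)=\zeta_A(1;s)-\sum_{i\geq d}S_i(1;s)$ and applying Theorem \ref{anglespellarin} yields $\frac{\omega(t_1)\cdots\omega(t_s)}{\widetilde{\pi}}\zeta_A(1;s)=\lambda_{1,s}$. Since $s\geq 2q-1\geq q$, Theorem \ref{theomathbs} identifies $\mathbb{D}_\mu=-\lambda_{1,s}$, so the divergent multiple of $\theta^{q^{d-m}}$ produced by the $\zeta_A$ contribution is killed exactly by $\mathbb{D}_\mu\,\theta^{q^{d-m}}$. What remains inside the limit is
$$\Bigl(\sum_{j=1}^s t_j\Bigr)\lambda_{1,s}-\frac{\omega(t_1)\cdots\omega(t_s)}{\widetilde{\pi}}\Bigl(\theta^{q^{d-m}}+\sum_{j=1}^s t_j\Bigr)\sum_{i\geq d}S_i(1;s),$$
and since $\sum_{i\geq d}S_i(1;s)\to 0$ in $\TT_s$ while $\sum_j t_j$ has bounded Gauss norm, the contribution of $\sum_j t_j$ multiplied by this tail vanishes in the limit.

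Rearranging therefore gives
$$\mathbb{D}_{\mu-1}+(t_1+\cdots+t_s)\lambda_{1,s}=\frac{\omega(t_1)\cdots\omega(t_s)}{\widetilde{\pi}}\lim_{d\to\infty}\theta^{q^{d-m}}\sum_{i\geq d}S_i(1;s),$$
with the limit on the right existing in $\TT_s$ \emph{a posteriori} because the left-hand side is a fixed element of $A[\undt{s}]$. Setting $\nu_{1,s}:=\mathbb{D}_{\mu-1}+(t_1+\cdots+t_s)\lambda_{1,s}\in A[\undt{s}]$ delivers both assertions of the Corollary simultaneously. There is no serious obstacle to this argument; the entire content is the bookkeeping of the cancellation between the leading coefficient $\mathbb{D}_\mu=-\lambda_{1,s}$ and the degree-one part of $Y\cdot\lambda_{1,s}$ appearing in the specialization of $\Gamma_{s,\mu-1}$. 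The only nontrivial point is the existence of the limit $\lim_d\theta^{q^{d-m}}\sum_{i\geq d}S_i(1;s)$, which is not evident from direct norm estimates but falls out of the displayed identity above.
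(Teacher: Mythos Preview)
Your argument is correct and is exactly the computation the paper has in mind: specialize Theorem~\ref{theoremhighercoefficients} at $r=\mu-1$, plug in $\Gamma_{s,\mu-1}=Y+\sum_j t_j$, and use $\mathbb{D}_\mu=-\lambda_{1,s}$ together with $F_d(1;s)=\zeta_A(1;s)-\sum_{i\geq d}S_i(1;s)$ to cancel the divergent $\theta^{q^{d-m}}$ term. The paper merely asserts that this works; you have written out the cancellation explicitly and correctly, including the observation that the existence of the limit follows \emph{a posteriori} from the equality with a fixed element of $A[\undt{s}]$.
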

We pose the following question.
\begin{Question} What is the arithmetic meaning of the lower coefficients $\mathbb{D}_r$, $r<\mu$?
Are they related in some way to Taelman's class modules, just as the coefficient $\mathbb{D}_\mu=-\lambda_{1,s}$
as shown in \cite{APTR}?
\end{Question}

\subsection*{Application to twisted power sums}
In his Ph. D. thesis \cite{DEM}, F. Demeslay recently proved that, for all $n\geq 1$ and $s'\geq 0$, there exists a unique 
rational fraction $Q_{n,s'}(\undt{s'},Y)\in K(\undt{s'},Y)$ such that, for some fixed integer
$r\geq0$ and for all $d$,
\begin{equation}\label{powersumsdemeslay} S_d(n;s')=l_d^{-n}b_d(t_1)\cdots b_d(t_{s'}) Q_{n,s'}(\undt{s'},\theta^{q^{d-r}}), \end{equation} hence providing a complement to \cite[(3.7.4)]{AND&THA}. Demeslay in fact proves 
several other properties related to the sums $S_d(n;s')$, and we refer the reader to his thesis ibid. for the details of his results.
Here we deduce, from Theorem \ref{Theorem2}, results similar to Demeslay's, though we do not make any explicit comparison with his results in this note.

We consider an integer $s\geq q$ such that $s\equiv1\pmod{q-1}$. We set $m=\frac{s-1}{q-1}$, and we choose an integer $s'$ such that $0\leq s'<s$.
We show that there exist {\em universal formulas} for the twisted power sums $S_{d-1}(1;s')$
 (this was also obtained by Demeslay) together
with a way to make them explicit by using the polynomials $\mathbb{H}_s$ (which is new compared to the previous literature).
For this, we set $\mathbb{H}_{s,s'}$ to be the coefficient of $t_{s'+1}^{m-1}\cdots t_{s}^{m-1}$
in  $\mathbb{H}_s$. It is a polynomial of $A[Y][\undt{s'}]$. It is easy to see that
the coefficient of $t_1^{m-1}\cdots t_s^{m-1}$ in $\mathbb{H}_s$ is one. Hence, the coefficient
of $t_1^{m-1}\cdots t_{s'}^{m-1}$ in $\mathbb{H}_{s,s'}$ is also equal to one. 

\begin{Theorem}\label{proppowersums}
Let $s = 1 + m(q-1)$, with $m \geq 1$, and let $0 \leq s' < s$. For each $d\geq m-1$, we have that
$$S_{d}(1;s')=l_d^{-1}\prod_{i=1}^{s'}b_{d+1-m}(t_i)\mathbb{H}_{s,s'}|_{Y=\theta^{q^{d+1-m}}}.$$
\end{Theorem}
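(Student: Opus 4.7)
The plan is to obtain Theorem \ref{proppowersums} as a direct coefficient-extraction consequence of Theorem \ref{Theorem2}. The starting observation is that $S_d(1;s')$ sits inside $S_d(1;s)$, and in turn inside $F_{d+1}(1;s)$, as the coefficient of a specific top monomial in the ``extra'' variables.

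First I would record the elementary but crucial remark: for each $i\ge 0$ and each $a\in A^+(i)$, the polynomial $a(t_j)$ is monic of degree exactly $i$ in $t_j$, so $S_i(1;s)\in K[\undt{s}]$ has degree at most $i$ in each $t_j$, with coefficient of $t_{s'+1}^i\cdots t_s^i$ equal to $\sum_{a\in A^+(i)} a(t_1)\cdots a(t_{s'})/a = S_i(1;s')$. Consequently, in the telescoping decomposition $F_{d+1}(1;s)=\sum_{i=0}^d S_i(1;s)$, only the top term $S_d(1;s)$ contributes to the monomial $t_{s'+1}^d\cdots t_s^d$, so the coefficient of $t_{s'+1}^d\cdots t_s^d$ in $F_{d+1}(1;s)$ equals $S_d(1;s')$.

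Next I would apply Theorem \ref{Theorem2} at index $d+1$ (which requires $d+1\ge m$, i.e.\ $d\ge m-1$, matching the hypothesis):
$$F_{d+1}(1;s)=\frac{b_{d+1-m}(t_1)\cdots b_{d+1-m}(t_s)}{l_d}\,\mathbb{H}_s\big|_{Y=\theta^{q^{d+1-m}}}.$$
Now I extract the coefficient of $t_{s'+1}^d\cdots t_s^d$ on the right-hand side. By Theorem \ref{Theorem2}, $\deg_{t_j}\mathbb{H}_s=m-1$, while $b_{d+1-m}(t_j)$ is monic of degree $d+1-m$ in $t_j$; hence the product has degree at most $d$ in each $t_j$, and the coefficient of $t_j^d$ is forced to come from pairing the leading $t_j^{d+1-m}$ of $b_{d+1-m}(t_j)$ with the coefficient of $t_j^{m-1}$ in $\mathbb{H}_s$. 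Carrying this out jointly in the variables $t_{s'+1},\ldots,t_s$, the coefficient of $t_{s'+1}^d\cdots t_s^d$ in the right-hand side is exactly
$$\frac{b_{d+1-m}(t_1)\cdots b_{d+1-m}(t_{s'})}{l_d}\,\mathbb{H}_{s,s'}\big|_{Y=\theta^{q^{d+1-m}}},$$
by the definition of $\mathbb{H}_{s,s'}$ as the coefficient of $t_{s'+1}^{m-1}\cdots t_s^{m-1}$ in $\mathbb{H}_s$.

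Equating the two expressions for the coefficient of $t_{s'+1}^d\cdots t_s^d$ in $F_{d+1}(1;s)$ yields the formula of Theorem \ref{proppowersums}. There is no serious obstacle here beyond being careful about the degree bookkeeping; the only thing to verify is that the degree constraint $\deg_{t_j}\mathbb{H}_s=m-1$ supplied by Theorem \ref{Theorem2} is sharp enough to pin the top-degree monomial in each $t_j$ uniquely to the leading-leading pairing described above, which it is.
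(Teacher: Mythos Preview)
Your proof is correct and follows essentially the same approach as the paper's own argument: both identify $S_d(1;s')$ as the coefficient of $t_{s'+1}^{d}\cdots t_s^{d}$ in $F_{d+1}(1;s)$ (the paper writes it as the coefficient of $t_{s'+1}^{d-1}\cdots t_s^{d-1}$ in $F_d(1;s)$, a harmless index shift), apply Theorem~\ref{Theorem2}, and use the degree bound $\deg_{t_j}\mathbb{H}_s=m-1$ to pin the top monomial to the leading-leading pairing.
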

This result also implies similar, but messier formulas for general twisted power sums $S_{d}(n;s')$
but we do not mention them here.

\subsection{Finite zeta values}\label{classicalcase} 
Theorem \ref{Theorem2} also opens a way through a study of {\em finite zeta and multi-zeta values}, which motivates the title of the paper. 

We consider, following Zagier, the ring
$$\AAA_\QQ=\frac{\displaystyle{\prod_p\frac{\ZZ}{p\ZZ}}}{\displaystyle{\bigoplus_p\frac{\ZZ}{p\ZZ}}},$$
the product and the direct sum running over the prime numbers $p$ (\footnote{It can be considered as
a kind of residue ring  of the rational ad\`eles $\mathbb{A}_\QQ$. Indeed,
there is a natural ring epimorphism
$\mathbb{A}_\QQ\rightarrow\AAA_\QQ$ sending an ad\`ele $(x_p)_p$
to the well defined residue $(x_p\pmod{p})_p$.}).  
Two elements
$(a_p)_p$ and $(b_p)_p\in\AAA_\QQ$ are equal if and only if $a_p=b_p$ for all
but finitely many $p$. The ring $\AAA_\QQ$ is not a domain. However, there is a natural injective ring homomorphism
$$\QQ\rightarrow\AAA_\QQ$$
defined by sending $r\in \QQ$ to the class modulo $\oplus_p\frac{\ZZ}{p\ZZ}$ of the sequence of its reductions mod $p$, well defined for almost all $p$ (that is, for all but finitely many $p$). Therefore, $\AAA_\QQ$ is a $\QQ$-algebra. This algebra is the main recipient for the 
theory of {\em finite multiple zeta values}, as in Kaneko's \cite{KAN}. 
For example, for all $k>0$, the finite zeta value of exponent $k$ trivially vanishes in $\AAA_\QQ$: 
$$\zeta_\AAA(k):=\left(\sum_{n=1}^{p-1}\frac{1}{n^k}\pmod{p}\right)_p=0\in\AAA_\QQ.$$
More generally, the finite multiple zeta values are defined, for $k_1,\ldots,k_r\in\ZZ$, by
$$\zeta_\AAA(k_1,\ldots,k_r)=\left(\sum_{0<n_1<\cdots<n_k<p}\frac{1}{n_1^{k_1}\cdots n_r^{k_r}}\pmod{p}\right)_p\in\AAA_\QQ.$$
As a matter of a fact, no one of these elements is currently known to be non-zero, and it is a basic problem in the theory to establish such non-vanishing. We discuss these issues further in the final section, \S \ref{finalsection}.

\subsection{Finite zeta values in the Carlitzian setting} We will apply Theorem \ref{Theorem2}
to show that certain variants of finite zeta values, introduced just below, are non-zero. 

As an analogue of the ring $\AAA_\QQ$, we consider the ring
$$\AAA_s :=\frac{\displaystyle{\prod_P\frac{\boldsymbol{F}_s[\theta]}{P\boldsymbol{F}_s[\theta]}}}{\displaystyle{\bigoplus_P\frac{\boldsymbol{F}_s[\theta]}{P\boldsymbol{F}_s[\theta]}}},$$
where the product and the direct sum run over the primes of $A$ (that is, irreducible monic polynomials of $A$). The indexation of the variables $t_1,\ldots,t_s$
induce embeddings $\AAA_0\hookrightarrow\AAA_1\hookrightarrow\cdots\hookrightarrow\AAA_s$ and in the following, we are viewing the rings $\AAA_s$ embedded one in the other as above. 
Let $K^{1/p^\infty}$ be the subfield of an algebraic closure $K^{ac}$ of $K$ whose elements $x$ are such that $x^{p^i}\in K$
for some $i$ (note that this is equal to the subfield of $K^{ac}$ whose elements are the $x$ such that $x^{q^i}\in K$ for some $i$). There is a natural embedding
$$K^{1/p^\infty}\otimes_{\FF_q}\boldsymbol{F}_s\xrightarrow{\iota}\AAA_s$$
(see \S \ref{iota} of the present paper) (\footnote{We prefer to keep our notations 
as simple as possible, and we avoid to denote the above embedding by $\iota_s$.}).
Let $P$ be a prime of degree $d$ in $\theta$; we extend the $P$-adic valuation $v_P$
of $K$ to $K\otimes_{\FF_q}\boldsymbol{F}_s$ by setting it to be the trivial valuation 
on $\boldsymbol{F}_s$.
Then, 
$v_P(S_i(n;s))\geq 0$ for all $0\leq i<d$ so that
$v_P(F_{d}(n;s))\geq 0$
for all $n\in\ZZ$ and $s\in\NN$. In particular, we have the {\em finite zeta value of level $s$ and exponent $n$},
$$Z_{\AAA}(n;s) :=\left(F_{\deg_\theta(P)}(n;s)\pmod{P}\right)_P\in\mathcal{A}_s.$$

The 
analysis of several examples of the zeta elements $Z_\AAA(n;s)$ leads us to formulate the 
following vanishing Conjecture, in which $n,s$ are integers such that $n>0$ and $s\geq 0$, and 
where, if $n=n_0+n_1q+\cdots+n_rq^r$ is the base-$q$ expansion of $n$, $\ell_q(n):=\sum_in_i$.

\begin{Conjecture}\label{nonvanishingconjecture}
The following properties hold.
\begin{enumerate}
\item If $n\not\equiv s\pmod{q-1}$, then $Z_\AAA(n;s)\neq0$.
\item If $n\equiv s\pmod{q-1}$, then $Z_\AAA(n;s)=0$ if and only if $\ell_q(n)>s$.
\end{enumerate}
\end{Conjecture}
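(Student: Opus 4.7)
The plan is to anchor the argument in Theorem~\ref{Theorem2}, which treats the case $n=1$, $s\equiv 1\pmod{q-1}$ directly, and then extend: to arbitrary $s$ via Theorem~\ref{proppowersums}, and to arbitrary $n$ via an analogous interpolating framework along the lines of Demeslay's formula~\eqref{powersumsdemeslay}.

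\emph{Base case: $n=1$, $s=1+m(q-1)$, $m\ge 1$.} Fix a prime $P$ of $A$ of degree $d\geq m$. Theorem~\ref{Theorem2} gives
$$F_d(1;s) \equiv \Pi_{s,d}\cdot\mathbb{H}_s(\theta^{q^{d-m}},\undt{s}) \pmod{P}.$$
In $\Pi_{s,d}$, the denominator $l_{d-1}$ reduces to $P'(\theta)$, a unit modulo $P$ by separability of the irreducible $P$; each $b_{d-m}(t_j)$ is a nonzero monic polynomial of degree $d-m$ in $t_j$. The monomial $t_1^{m-1}\cdots t_s^{m-1}$ has coefficient $1\in A$ in $\mathbb{H}_s$ independent of $Y$, so it survives both the substitution $Y=\theta^{q^{d-m}}$ and the reduction modulo $P$. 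Taking leading coefficients in each $t_j$, the product $\Pi_{s,d}\cdot\mathbb{H}_s(\theta^{q^{d-m}},\undt{s})$ has leading monomial $t_1^{d-1}\cdots t_s^{d-1}$ with coefficient $1/l_{d-1}\not\equiv 0\pmod P$. Hence $F_d(1;s)\not\equiv 0\pmod{P}$ for every $P$ of degree at least $m$, so $Z_\AAA(1;s)\neq 0$. Since $\ell_q(1)=1\leq s$, this confirms part~(2) of the Conjecture in this case; $s=1$ is handled directly via $\mathbb{H}_1=1/(t_1-\theta)$.

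\emph{General $n$ and $s$.} For $n=1$ and arbitrary $s$, one applies Theorem~\ref{proppowersums} to each partial sum $S_i(1;s)$ (after enlarging to some $s'>s$ with $s'\equiv 1\pmod{q-1}$) and sums over $i<d$. For general $n$, I would attack both directions of part~(2) together. The non-vanishing direction ($\ell_q(n)\leq s$) should follow from an extension of Theorem~\ref{Theorem2} to exponent $n$: starting from Demeslay's interpolation $S_d(n;s)=l_d^{-n}\prod_j b_d(t_j)\,Q_{n,s}(\undt{s},\theta^{q^{d-r}})$, one sums over $d$ and hopes to isolate a distinguished top monomial in $\undt{s}$ whose reduction modulo $P$ is a unit. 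The vanishing direction ($\ell_q(n)>s$) I would approach via a Wolstenholme-type character-sum argument: expanding $a^n=\prod_i(a^{q^i})^{n_i}$ and passing from monic polynomials of degree $<d$ to residues in $\FF_{q^d}^\times$ under $a\mapsto a(\theta)\bmod P$, the inner sums reduce to classical sums $\sum_{\gamma\in\FF_{q^d}^\times}\gamma^k$, which vanish unless $(q^d-1)\mid k$; the digit-sum inequality $\ell_q(n)>s$ should obstruct this divisibility for all sufficiently large $d$.

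The main obstacle is to establish the higher-$n$ analogue of Theorem~\ref{Theorem2} in a form that cleanly exhibits a nonzero top monomial, so as to transport the base-case argument. A secondary obstacle is the boundary case $\ell_q(n)=s$ of part~(2), where both the monomial argument and the character-sum argument are delicate; resolving it will likely require a careful identification of the top coefficient with an arithmetic invariant analogous to the class-module generator $\lambda_{1,s}$ of Theorem~\ref{theomathbs}.
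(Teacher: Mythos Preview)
First, note that the statement under discussion is a \emph{Conjecture}: the paper does not claim to prove it in full. What the paper does establish is: part~(1) for $s>0$ (Lemma~\ref{casesnot0}), the ``if'' direction of part~(2) (Theorem~\ref{proposition1}), and the full case $n=1$ of part~(2) (Theorem~\ref{Theorem1}). The ``only if'' direction of~(2) for general $n\ge 2$ is left open. So your proposal should be read as an attempt to go beyond the paper, and it is appropriate that you flag the higher-$n$ step as the main obstacle.

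\textbf{Your base case versus the paper.} For $n=1$, $s\equiv 1\pmod{q-1}$, your leading-monomial argument is correct and in fact more elementary than the paper's proof of non-vanishing in Theorem~\ref{Theorem1}. Indeed you do not really need Theorem~\ref{Theorem2} at all: the top monomial $t_1^{d-1}\cdots t_s^{d-1}$ in $F_d(1;s)$ comes only from $S_{d-1}(1;s)$, and its coefficient is $\sum_{a\in A^+(d-1)}a^{-1}=S_{d-1}(1;0)=1/l_{d-1}\equiv 1/P'(\theta)\pmod P$, a unit. The paper instead produces the explicit identity $Z_\AAA(1;s)=\widehat{\Pi}_{1,s}\iota(\mu_{1,s})$ and then proves $\mu_{1,s}\neq 0$ by a degree-counting argument modulo $\Psi_\varpi$ for prime $\varpi$. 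Your route reaches the unit property faster but forfeits the explicit formula, which is the real content of Theorem~\ref{Theorem1}.

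\textbf{The vanishing direction and part~(1).} Here your sketches diverge from the paper and are less efficient. For the ``if'' direction of~(2), the paper does not use character sums over $\FF_{q^d}^\times$; it introduces $s'-s=\ell_q(q^{d'}-1-n)$ auxiliary variables, specializes them at $\theta,\theta^q,\ldots$ so that $a(t_{s+1})\cdots a(t_{s'})=a^{q^{d'}-1-n}$, and then invokes Simon's Lemma ($S_{j,s'}=0$ once $j(q-1)>s'$) together with the digit identity $\ell_q(q^{d'}-1-n)=d'(q-1)-\ell_q(n)$. This is sharper and avoids the delicate boundary analysis you anticipate. For part~(1), the paper likewise specializes all $t_i\mapsto\theta^{q^k}$ to land on the fixed nonzero polynomial $\operatorname{BG}(N;0)\in A$ with $N=sq^k-n\not\equiv 0\pmod{q-1}$; if $Z_\AAA(n;s)$ failed to be a unit, this polynomial would be divisible by infinitely many primes. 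Your proposed detour through Theorem~\ref{proppowersums} is unnecessary here.

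\textbf{The genuine gap.} For the ``only if'' direction of~(2) with $n\ge 2$, your plan to extract a unit top monomial from Demeslay's interpolation~\eqref{powersumsdemeslay} is exactly where the difficulty lies: one needs control not on individual $S_d(n;s)$ but on the \emph{sum} $F_d(n;s)$, and no analogue of the identity ``top coefficient of $\mathbb{H}_s$ equals~$1$'' is known for general $n$. The paper does not resolve this, and neither does your sketch; it remains an open problem.
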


It is easy to deduce, in the case $q>2$, from \cite[Lemma 3.5]{ANG&DOU}, that $Z_\AAA(1;0)\in\AAA_0\setminus\{0\}$
agreeing with (1) of the conjecture.
In Lemma \ref{casesnot0} below, we show that the case (1) of the conjecture 
holds true if $s>0$. 
In fact, the case $n=1$ of the conjecture is completely settled including in the case (2), more difficult, as we are going to 
see now.
We consider the following elements, 
$$\widehat{\pi} :=\left(-\frac{1}{P'}\right)_P\in\AAA_0^\times,\quad \widehat{\omega}(t) :=\left(\frac{1}{P(t)}\right)_P\in\AAA_1^\times, \text{ and } \quad \widehat{\Pi}_{n,s} :=\frac{\widehat{\pi}^n}{\widehat{\omega}(t_1)\cdots\widehat{\omega}(t_s)}\in\AAA_s^\times,$$
where we write $t=t_1$ if $s=1$ and $n,s$ are integers such that $n\equiv s\pmod{q-1}$
(if $R$ is a unitary ring, $R^\times$ denotes the group of invertible elements of $R$).
Here, the dash $'$ denotes the derivative with respect to $\theta$ in $K$. 
It is easy to show that they are indeed units of the respective rings; further basic properties of $\widehat{\pi}$ and $\widehat{\omega}$ are given in \S \ref{periodproperties}; for example,
we will prove in Theorem \ref{irrational} that $\widehat{\pi}$ is ``irrational", i.e. it does not belong to 
$\iota(K)$. 

Our main application of Theorem \ref{Theorem2} to finite zeta values is the following result.

\begin{Theorem}\label{Theorem1}
Assume that $s\equiv 1\pmod{q-1}$. Then, there exists a non-zero explicit element $\mu_{1,s}\in
K^{1/p^\infty}\otimes_{\FF_q}\boldsymbol{F}_s$  such that
$$Z_\AAA(1;s)=\widehat{\Pi}_{1,s}\iota(\mu_{1,s}).$$
In particular, $Z_\AAA(1;s) \in \AAA_s^\times$.
\end{Theorem}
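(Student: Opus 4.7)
The plan is to reduce the identity of Theorem \ref{Theorem2} modulo each prime $P$ of $A$ and assemble the resulting congruences into a single identity in $\AAA_s$. Since only finitely many primes have degree less than $m$, the hypothesis $d\ge m$ in Theorem \ref{Theorem2} is harmless when passing to $\AAA_s$.

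The key step is to establish three congruences modulo a prime $P$ of degree $d\ge m$. Over $A/PA\cong\FF_{q^d}$ the Frobenius orbit of $\theta$ is $\{\theta,\theta^q,\ldots,\theta^{q^{d-1}}\}$, so
\[
P(t)\equiv\prod_{i=0}^{d-1}(t-\theta^{q^i})\pmod P
\]
as polynomials in $t$; differentiating in $t$ and evaluating at $t=\theta$ kills every term except the one missing the factor $(t-\theta)$, yielding $P'(\theta)\equiv l_{d-1}\pmod P$. Splitting the factorization above gives
\[
b_{d-m}(t_j)\equiv\frac{P(t_j)}{\prod_{i=1}^{m}(t_j-\theta^{q^{d-i}})}\pmod P.
\]
Letting $\vartheta_i\in K^{1/p^\infty}$ denote the unique $q^i$-th root of $\theta$, the bijectivity of Frobenius on $\FF_{q^d}$ together with $(\theta^{q^{d-i}})^{q^i}=\theta$ in $A/PA$ gives $\theta^{q^{d-i}}\equiv\vartheta_i\pmod P$ for $1\le i\le m$. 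Substituting these into $F_d(1;s)=\Pi_{s,d}\,\mathbb{H}_s|_{Y=\theta^{q^{d-m}}}$ and recalling $\widehat{\pi}=(-1/P')_P$, $\widehat{\omega}(t_j)=(1/P(t_j))_P$, one obtains
\[
Z_\AAA(1;s)=\widehat{\Pi}_{1,s}\cdot\iota(\mu_{1,s}),\qquad\mu_{1,s}:=\frac{-\mathbb{H}_s(\vartheta_m,\underline{t}_s)}{\prod_{i=1}^{m}\prod_{j=1}^{s}(t_j-\vartheta_i)}.
\]
For $s=1$ the double product is empty and $\mu_{1,1}=-1/(t_1-\theta)$, consistent with $\mathbb{H}_1=1/(t_1-\theta)$. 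Strictly speaking $\mu_{1,s}$ lies in the fraction field $K^{1/p^\infty}(\underline{t}_s)$ of $K^{1/p^\infty}\otimes_{\FF_q}\boldsymbol{F}_s$, rather than in the tensor product itself, but $\iota$ extends naturally to this fraction field since each nonzero element reduces, at almost every $P$, to a nonzero element of the field $\FF_{q^d}(\underline{t}_s)$, hence to a unit of $\AAA_s$.

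The final step is the non-vanishing of $\mu_{1,s}$. The case $s=1$ is immediate; for $s=1+m(q-1)$ with $m\ge 1$ it suffices to show $\mathbb{H}_s(\vartheta_m,\underline{t}_s)\neq 0$. By Theorem \ref{Theorem2}, $\mathbb{H}_s\in A[Y,\underline{t}_s]$ is a nonzero polynomial of $Y$-degree $\frac{q^m-1}{q-1}-m$, strictly less than $q^m$. Since $K(\underline{t}_s)/K$ is purely transcendental, hence separable, and $K(\vartheta_m)/K$ is purely inseparable, these two extensions are linearly disjoint over $K$; consequently $Y^{q^m}-\theta$ remains the minimal polynomial of $\vartheta_m$ over $K(\underline{t}_s)$. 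Hence no nonzero element of $K(\underline{t}_s)[Y]$ of $Y$-degree strictly less than $q^m$ can vanish at $Y=\vartheta_m$, giving $\mathbb{H}_s(\vartheta_m,\underline{t}_s)\neq 0$. Since $\widehat{\Pi}_{1,s}\in\AAA_s^\times$ and $\iota(\mu_{1,s})$ is a unit, this shows $Z_\AAA(1;s)\in\AAA_s^\times$.

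The main obstacle I anticipate is the non-vanishing of $\mathbb{H}_s(\vartheta_m,\underline{t}_s)$, for which the sharp $Y$-degree bound on $\mathbb{H}_s$ from Theorem \ref{Theorem2} is essential. The appearance of $K^{1/p^\infty}$ in the statement is explained precisely by the congruence $\theta^{q^{d-m}}\equiv\vartheta_m\pmod P$, which brings inseparable roots of $\theta$ into $\mu_{1,s}$ in an essential way.
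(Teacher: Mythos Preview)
Your derivation of the formula for $\mu_{1,s}$ is essentially the paper's: both arguments reduce the identity of Theorem~\ref{Theorem2} modulo a prime $P$ of degree $d\ge m$, using $l_{d-1}\equiv P'(\theta)$, $b_d(t)\equiv P(t)$, and $\theta^{q^{d-i}}\equiv\theta^{q^{-i}}\pmod P$ to express the $P$-component of $Z_\AAA(1;s)$ as $\widehat{\Pi}_{1,s}$ times the reduction of a fixed element of $K^{1/p^\infty}(\underline{t}_s)$. Your observation that $\mu_{1,s}$ literally lives in the fraction field rather than in $K^{1/p^\infty}\otimes_{\FF_q}\boldsymbol{F}_s$ is accurate; the paper is slightly imprecise on this point.

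Where you genuinely diverge is the non-vanishing of $\mu_{1,s}$. The paper argues indirectly: it uses that $H_{s,d}=\mathbb{H}_s(\theta^{q^{d-m}})$ is, for large $d$, monic in $\theta$ of degree $w=\mu q^{d-m}+m-1<q^d-q$, hence cannot vanish modulo $\Psi_d(\theta)=(\theta^{q^d}-\theta)/(\theta^q-\theta)$; taking $d=\varpi$ prime, $\Psi_\varpi$ is the product of all primes of degree $\varpi$, and one contradicts the assumption that $H_{s,d}\equiv 0\pmod P$ for all $P$ of large degree. Your argument is shorter and purely algebraic: by Theorem~\ref{Theorem2} the polynomial $\mathbb{H}_s\in A[Y,\underline{t}_s]$ has $Y$-degree $\mu=\frac{q^m-1}{q-1}-m<q^m$, and since $Y^{q^m}-\theta$ remains irreducible over $K(\underline{t}_s)$ (linear disjointness of the purely inseparable extension $K(\vartheta_m)/K$ and the separable extension $K(\underline{t}_s)/K$), no nonzero polynomial of $Y$-degree below $q^m$ can vanish at $\vartheta_m$. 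This is cleaner and makes transparent exactly which input from Theorem~\ref{Theorem2} is needed, namely the bound $\deg_Y\mathbb{H}_s<q^m$. The paper's route, by contrast, relies on the $\theta$-degree of $H_{s,d}$ via the identification of the leading $Y$-coefficient with $-\lambda_{1,s}$ (Theorem~\ref{theomathbs}), so it draws on slightly more of the surrounding machinery.
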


We recall that the second author has shown \cite{PER2} that, for $n\geq 0$ and $s\geq0$, 
the sequence of the partial sums of the series $\sum_{d\geq 0}S_{d}(-n;s)$ is ultimately equal to a polynomial
$\operatorname{BG}(n;s)\in A[\undt{s}]$ called the {\em Bernoulli-Goss polynomial of order $n$ and level $s$}. The next result settles the ``if'' part of (2) of Conjecture \ref{nonvanishingconjecture}. We recall that $\ell_q(n)$ denotes the sum of the digits of the base-$q$ expansion of the positive integer $n$.

\begin{Theorem}\label{proposition1}
Let $n$ and $s$ be integers such that $n\geq1$ and $s\geq 0$. Suppose that $\ell_q(n) > s$. Then $Z_\AAA(n;s) = 
(\operatorname{BG}(q^{\deg_\theta(P)}-1-n;s)\pmod{P})_P$. In particular, if 
$s\equiv n\pmod{q-1}$, we have that $Z_\AAA(n;s)=0$.
\end{Theorem}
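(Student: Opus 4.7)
The plan is a single mod-$P$ calculation applied to the definition $F_d(n;s) = \sum_{i=0}^{d-1} S_i(n;s)$. Fix a prime $P \in A$ of degree $d$, large enough that $q^d - 1 \geq n$; this excludes only finitely many primes. Every $a \in A^+(<d)$ is coprime to $P$, so Fermat's little theorem yields $a^{q^d - 1} \equiv 1 \pmod{P}$, whence $a^{-n} \equiv a^{q^d - 1 - n} \pmod{P}$. Plugging this in gives
\[
F_d(n;s) \;\equiv\; \sum_{i=0}^{d-1} S_i(n + 1 - q^d;\, s) \pmod{P}.
\]
I would then invoke the second author's vanishing result for twisted power sums, which is what makes the partial sums defining $\operatorname{BG}(N;s)$ stabilise: $S_i(-N;s) = 0$ whenever $i(q-1) > \ell_q(N) + s$. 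Applied to $N = q^d - 1 - n$ with the digit identity $\ell_q(q^d - 1 - n) = d(q-1) - \ell_q(n)$ (valid for $n < q^d$), the vanishing condition becomes $i > d - (\ell_q(n) - s)/(q-1)$, which holds for every $i \geq d$ \emph{precisely} under the hypothesis $\ell_q(n) > s$. Hence $\sum_{i=0}^{d-1} S_i(n+1-q^d;s) = \sum_{i \geq 0} S_i(n+1-q^d;s) = \operatorname{BG}(q^d - 1 - n;\, s)$, which is the main identity.

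For the `in particular' clause, assume $s \equiv n \pmod{q-1}$ as well. Scaling $a \in A^+(<d)$ by $c \in \FF_q^\times$ multiplies the summand $a(t_1) \cdots a(t_s)/a^n$ by $c^{s-n} = 1$, so summing over the $\FF_q^\times$-orbits of the monic representatives and using $\sum_{c \in \FF_q^\times} 1 = q - 1 \equiv -1 \pmod{p}$ gives
\[
F_d(n;s) \;\equiv\; -\sum_{x \in \FF_{q^d}^\times} \tilde x(t_1) \cdots \tilde x(t_s)\, x^{q^d - 1 - n} \pmod{P},
\]
where $\tilde x \in A$ is the unique polynomial of degree $<d$ reducing to $x \in A/P \cong \FF_{q^d}$. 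The $\FF_q$-linear map $x \mapsto \tilde x(t_j)$ admits a unique Frobenius-sum expansion $\sum_{k=0}^{d-1} c_k^{(j)}(t_j)\, x^{q^k}$ by dual-basis considerations. Expanding the product and applying the orthogonality $\sum_{x \in \FF_{q^d}} x^E = -1$ when $E > 0$ and $(q^d - 1) \mid E$, and $0$ otherwise, isolates the surviving contributions as indexed by tuples $(k_1, \ldots, k_s) \in \{0, \ldots, d-1\}^s$ with $q^{k_1} + \cdots + q^{k_s} = n + (r-1)(q^d - 1)$ for some integer $r \geq 1$.

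The remaining step, which I expect to be the main obstacle, is to verify that no such tuple exists. The minimum number of powers from $\{q^0, \ldots, q^{d-1}\}$ needed to represent a positive integer $M$ is at least its $q$-ary digit sum $\ell_q(M)$. For $M_r := n + (r-1)(q^d - 1) = (n - r + 1) + (r-1) q^d$ in the range $1 \leq r \leq n + 1$ (so that no cross-carries occur between the two summands once $d$ is large), the clean base-$q$ concatenation gives $\ell_q(M_r) = \ell_q(n - r + 1) + \ell_q(r - 1)$; combined with the carry-count identity $\ell_q(n) = \ell_q(n - r + 1) + \ell_q(r - 1) - (q-1)\, c$ (with $c \geq 0$) applied to the addition $(n - r + 1) + (r - 1) = n$, this yields $\ell_q(M_r) \geq \ell_q(n) > s$. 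For the residual range $r \geq n + 2$ (which is bounded above by roughly $s/q$ since each $q^{k_j} \leq q^{d-1}$), the expansion of $M_r$ acquires a block of $(q-1)$-digits of length of order $d$, so $\ell_q(M_r) \geq d(q-1) - O(1)$, exceeding $s$ for $d$ sufficiently large. No tuple survives, the character sum vanishes, and thus $F_d(n;s) \equiv 0 \pmod P$ for all but finitely many primes $P$, whence $Z_\AAA(n;s) = 0$.
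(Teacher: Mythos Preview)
Your proof of the main identity $Z_\AAA(n;s) = (\operatorname{BG}(q^{\deg P}-1-n;s)\pmod{P})_P$ matches the paper's approach exactly: Fermat's little theorem converts $a^{-n}$ to $a^{q^d-1-n}$, and then the tail terms $S_i$ for $i\geq d$ vanish by the digit-sum bound (the paper states this follows from \cite[Theorem 3.1]{PER2} and also reproves it via specialisation and Simon's Lemma, which is precisely the bound $S_i(-N;s)=0$ for $i(q-1)>\ell_q(N)+s$ that you invoke).

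For the ``in particular'' clause you take a genuinely different route. The paper simply quotes \cite[Theorem 4.2]{PER2}, which says $\operatorname{BG}(q^d-1-n;s)=0$ identically in $A[\undt{s}]$ once $q^d>n$ and $n\equiv s\pmod{q-1}$; combined with the main identity this is a one-line finish. You instead bypass the $\operatorname{BG}$ representation entirely and compute $F_d(n;s)\pmod P$ as a character sum over $\FF_{q^d}^\times$, expand each $\tilde x(t_j)$ as a $q$-polynomial, and use orthogonality to reduce to a combinatorial non-representability statement. Your Case~1 argument (for $1\leq r\leq n+1$) via the carry identity $\ell_q(n-r+1)+\ell_q(r-1)=\ell_q(n)+(q-1)c\geq\ell_q(n)>s$ is clean and correct. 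Your Case~2 ($r\geq n+2$) is in fact vacuous: since $\ell_q(n)>s$ forces $n>s$, while $\sum q^{k_j}\leq sq^{d-1}$ forces $r-1\leq s/(q-1)\leq s<n+1$, so no such $r$ exists and the hand-wavy ``block of $(q-1)$-digits'' estimate is never needed. With that simplification your argument is fully rigorous and has the advantage of being self-contained (no appeal to \cite{PER2} for the vanishing), at the cost of being longer than the paper's citation.
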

We deduce that $Z_\AAA(n;0)=0$ for all $n\geq 1$, with $n\equiv0\pmod{q-1}$.

\subsubsection*{Acknowledgements} The first author is thankful to Masanobu Kaneko for several fruitful discussions about his work on finite multiple zeta values, during his stay in Saint-Etienne and Lyon in September 2015, which helped and motivated this work. Both authors are indebted with David Goss for helpful suggestions and continuous encouragement. We gratefully acknowledge Bruno Angl\`es for helpful comments and for having drawn to our attention the reference \cite{ANG&DOU}. Finally, we thank A. Maurischat for his careful reading of the proof of Theorem 1 and valuable comments which improved and simplified the presentation.

\section{Proofs}

\subsubsection*{Basic notation}

\begin{itemize}
\item $\lfloor x\rfloor$: largest rational integer $n$ such that $n\leq x$, with $x$ real number.
\item $\log_q(x)$: logarithm in base $q$ of the real number $x>0$.
\item $\ell_q(n)$: sum of the digits of the expansion in base $q$ of the non-negative integer $n$.
\item $m$: the integer $\lfloor\frac{s-1}{q-1}\rfloor$ for $s\geq 1$.
\item $[n]$: The polynomial $\theta^{q^n}-\theta\in A$ with $n>0$ integer. We also set $[0]:=1$. 
\item $l_n$: the sequence of $A$ defined by $l_0=1$ and $l_n=-[n]l_{n-1}$ with $n>0$.
\item $D_n$: the sequence of $A$ defined by $D_0=1$ and $D_n=[n]D_{n-1}^q$.
\item $b_n$: the polynomial $(Y-\theta)\cdots(Y-\theta^{q^{n-1}})\in A[Y]$ if $n>0$ and $b_0=1$,
for $n>0$.
\item $A(i)$: for $i\geq 1$ an integer, the $\FF_q$-vector space of polynomials
of $A$ of degree $<i$ in $\theta$. We also set $A(0)=\{0\}$.
\item $\boldsymbol{F}_s$: the field $\FF_q(\undt{s})=\FF_q(t_1,\ldots,t_s)$.
\end{itemize}

\subsection{Proof of Theorem \ref{Theorem2}}\label{beggining}

We first consider the separate case of $s=1$ and we set $t=t_1$. 
It is easy to see that in $K[t]$, for all 
$d\geq 1$, 
\begin{equation}\label{ide1}
F_d(1;1)=\sum_{i=0}^{d-1}\sum_{a\in A^+(i)}a^{-1}a(t)=\frac{b_d(t)}{(t-\theta)l_{d-1}},
\end{equation}
see for instance \cite{ANG&PEL2}.
This proves Theorem \ref{Theorem2} with $s=1$, $m=0$ and $\mathbb{H}_1=\frac{1}{t-\theta}\in K(t)$.

\medskip

In all the rest of this subsection, we suppose that $s \geq q$ is an integer such that $s\equiv1\pmod{q-1}$, and we let $m = \frac{s-1}{q-1}$.
We proceed in several steps: in \S \ref{step1} we analyze the dependence on $d$ of the relations and 
the occurrence of the variable $Y$, in \S \ref{step2} we study the interpolation properties of certain series
and conclude the proof of the first part of Theorem 
\ref{Theorem2}, namely, that $\mathbb{H}_s$ exists and belongs to $K(Y)[\undt{s}]$.
Finally, in \S \ref{step4}, we conclude the proof of Theorem \ref{Theorem2} by using a Zariski density argument.

\subsubsection{Existence of universal relations}\label{step1}

We shall write, for a new indeterminate $z$
$$E_i=E_i(z) :=D_i^{-1}\prod_{a\in A(i)}(z-a)\in K[z],$$ where $A(i)$ denotes the $\FF_q$-vector space
of polynomials of $A$ whose degree is strictly less than $i$ in $\theta$.
In Goss' \cite[Theorem 3.1.5]{GOS}, the following formula, due to Carlitz, is proved:
$$E_i(z)=\sum_{j=0}^i\frac{z^{q^j}}{D_jl_{i-j}^{q^j}},\quad i\geq0.$$
In particular, we deduce the following result that we will use later.
\begin{Proposition}\label{propositionE}
The following properties hold, for all $i\geq 0$.
\begin{enumerate}
\item The polynomial $E_i$ is $\FF_q$-linear of degree $q^i$ in $z$, and $E_i(\theta^i) = 1$. 
\item For all $a\in A(i)$ and all $i\geq 0$ we have 
$$\left.\frac{E_i(z)}{z-a}\right|_{z = a} = \frac{d}{dz}E_i = \frac{1}{l_i}.$$
\item For all $i\geq 0$, we have that $E_i^q=E_i+[i+1]E_{i+1}$. \hfill \qed
\end{enumerate}
\end{Proposition}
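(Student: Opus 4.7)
The plan is to establish the three items in turn, using the explicit Carlitz formula $E_i(z) = \sum_{j=0}^i z^{q^j}/(D_j l_{i-j}^{q^j})$ just recalled from \cite{GOS} as the main tool.

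For (1), I would observe that $A(i)$ is an $\FF_q$-subspace of $A$ of dimension $i$, so the monic polynomial $\prod_{a \in A(i)}(z-a)$ is the standard additive polynomial attached to this subspace; it is therefore $\FF_q$-linear of degree $|A(i)| = q^i$, and multiplying by $D_i^{-1}$ preserves these features. For the normalization $E_i(\theta^i) = 1$, the map $a \mapsto \theta^i - a$ is a bijection $A(i) \to A^+(i)$ (each image is monic of degree exactly $i$, and there are $q^i$ of them), whence $\prod_{a \in A(i)}(\theta^i - a) = \prod_{b \in A^+(i)} b = D_i$, the last equality being the classical identity obtained by induction from the recursion $D_n = [n]D_{n-1}^q$.

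For (2), any $\FF_q$-linear polynomial $E_i(z) = \sum_j c_j z^{q^j}$ has constant derivative $E_i'(z) = c_0$, and reading off the $j=0$ term of the Carlitz formula gives $c_0 = 1/(D_0 l_i) = 1/l_i$. Since each $a \in A(i)$ is a simple root of $E_i$, the evaluation
$$\left.\frac{E_i(z)}{z-a}\right|_{z=a} = \lim_{z \to a}\frac{E_i(z) - E_i(a)}{z-a} = E_i'(a) = \frac{1}{l_i}$$
follows at once, yielding both equalities of (2).

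For (3), I propose to set $P(z) := E_i(z)^q - E_i(z) - [i+1]E_{i+1}(z)$ and show $P = 0$ by a root-counting argument. Each summand is $\FF_q$-linear of degree at most $q^{i+1}$, so $P$ is too. I would first show $\deg P < q^{i+1}$ by comparing leading coefficients: that of $E_i^q$ is $1/D_i^q$, while that of $[i+1]E_{i+1}$ is $[i+1]/D_{i+1}$, and these coincide thanks to $D_{i+1} = [i+1]D_i^q$. It then suffices to verify that $P$ vanishes on each of the $q^{i+1}$ points of $A(i+1) = A(i) \oplus \FF_q\theta^i$. Any $a \in A(i+1)$ satisfies $E_{i+1}(a) = 0$; if moreover $a \in A(i)$, then all three summands vanish at $a$, and if $a = b + c\theta^i$ with $b \in A(i)$ and $c \in \FF_q$, then by $\FF_q$-linearity and (1) one has $E_i(a) = E_i(b) + c E_i(\theta^i) = c \in \FF_q$, hence $E_i(a)^q = c = E_i(a)$, and $P(a) = 0$.

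I expect no significant obstacle: everything reduces to classical manipulations with the Carlitz additive-polynomial toolkit combined with $\FF_q$-linear bookkeeping. The only two points deserving care are the leading-coefficient cancellation in (3), which hinges on the recursion $D_{i+1} = [i+1]D_i^q$, and the normalization $E_i(\theta^i) = 1$ underlying the second half of the argument for (3), which in turn rests on the classical identity $D_i = \prod_{b \in A^+(i)} b$.
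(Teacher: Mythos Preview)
Your proposal is correct. The paper itself does not give a proof of this proposition: it simply records Carlitz's formula $E_i(z)=\sum_{j=0}^i z^{q^j}/(D_j l_{i-j}^{q^j})$ with a reference to \cite[Theorem 3.1.5]{GOS} and appends a \qed, treating (1)--(3) as immediate consequences. Your write-up supplies exactly the standard verifications one would carry out from that formula and the product definition, so there is nothing to compare beyond noting that you have made explicit what the paper leaves to the reference; in particular your root-counting argument for (3), leaning on the normalization $E_i(\theta^i)=1$ from (1) and the recursion $D_{i+1}=[i+1]D_i^q$, is clean and entirely in line with how these identities are customarily proved.
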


Next, if $n$ is a non-negative integer and if $n=n_0+n_1q+\cdots+n_rq^r$ is its base-$q$ expansion
(with the digits $n_0,\ldots,n_r\in\{0,\ldots,q-1\}$), we shall consider the polynomial
$$G_n=E_0^{n_0}\cdots E_r^{n_r}\in K[z],$$ so that $G_0=1$. By Proposition \ref{propositionE}, (1), $\deg_z(G_n)=n$
so that every polynomial $Q\in K[z]$ can be written, in a unique way, as a finite sum
$$Q=\sum_{n\geq 0}c_nG_n,\quad c_n\in K.$$

\begin{Lemma}[Universal Relations Lemma] \label{univcoeffslem}
Let $\underline{j} := (j_1,\dots,j_r)$ be an $r$-tuple of non-negative integers. There exist polynomials $\{c_{\underline{j},i}\}_{i \geq 0} \subset A[Y]$, all but finitely many of which are non-zero, such that for each non-negative integer $n$ we have
\[ E_{n+j_1}E_{n+j_2} \cdots E_{n+j_r} = \sum_{i \geq 0} \left. c_{\underline{j},i} \right|_{Y = \theta^{q^n}} G_{iq^n}. \]
\end{Lemma}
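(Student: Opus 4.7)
My plan is to argue by induction on $r$. The base case $r=0$ is the tautology $1=c_{\emptyset,0}(Y)\,G_0$ with $c_{\emptyset,0}(Y)=1$. For the inductive step, factor off the last factor: by the inductive hypothesis applied to $\underline{j}':=(j_1,\dots,j_{r-1})$ we have $E_{n+j_1}\cdots E_{n+j_{r-1}}=\sum_i c_{\underline{j}',i}(\theta^{q^n})\,G_{iq^n}$. It therefore suffices to establish a \emph{multiplication rule}: for any $i,j\ge0$, there exist polynomials $f_{i,j,k}(Y)\in A[Y]$, independent of $n$ and with only finitely many nonzero, such that
\[ G_{iq^n}\cdot E_{n+j}\;=\;\sum_k f_{i,j,k}(\theta^{q^n})\,G_{kq^n}\qquad(n\ge 0). \]
Granting this, the polynomial $c_{\underline{j},k}(Y):=\sum_i c_{\underline{j}',i}(Y)\,f_{i,j_r,k}(Y)\in A[Y]$ completes the induction.

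To prove the multiplication rule, write $i=\sum_{k\ge 0}i_kq^k$ in base $q$, so that by the definition of $G_\bullet$ we have $G_{iq^n}=\prod_{k\ge 0}E_{n+k}^{i_k}$, and multiplication by $E_{n+j}$ raises the exponent at position $j$ from $i_j$ to $i_j+1$. If $i_j+1<q$, the product is already $G_{(i+q^j)q^n}$ with coefficient $1\in A$. Otherwise $i_j=q-1$, and I apply Proposition \ref{propositionE}(3) in the form $E_{n+j}^q=E_{n+j}+[n+j+1]\,E_{n+j+1}$ to split the product into two terms; the key observation is the identity
\[ [n+\ell]\;=\;\theta^{q^{n+\ell}}-\theta\;=\;(Y^{q^\ell}-\theta)\big|_{Y=\theta^{q^n}}, \]
so that the scalar $[n+j+1]$ is the specialization at $Y=\theta^{q^n}$ of a polynomial in $A[Y]$ not depending on $n$. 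The first summand in the split already has all exponents $<q$ and thus equals a $G_{kq^n}$ with unit coefficient, while the second has exponent $i_{j+1}+1$ at position $j+1$; if this in turn equals $q$, I iterate the identity at position $j+1$, and so on. Because every $i_k$ is strictly less than $q$, this ``carry cascade'' moves strictly to the right, one position per step, and stops at the first position $k^*$ for which $i_{k^*}<q-1$ (or when the digits run out). All coefficients produced along the way are finite products of polynomials $Y^{q^\ell}-\theta$, and so lie in $A[Y]$.

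The finiteness statement follows from the coarse degree bound $\deg_z(E_{n+j_1}\cdots E_{n+j_r})=q^n(q^{j_1}+\cdots+q^{j_r})$, which forces $c_{\underline{j},i}=0$ once $i>q^{j_1}+\cdots+q^{j_r}$, a bound independent of $n$. The main obstacle I anticipate is simply bookkeeping through the double induction (on $r$ and within each step on the length of the cascade); a slicker route that avoids the cascade altogether is to introduce the auxiliary $A[Y]$-algebra
\[ R\;:=\;A[Y][X_0,X_1,X_2,\dots]\Big/\bigl(X_k^q-X_k-(Y^{q^{k+1}}-\theta)X_{k+1}:k\ge 0\bigr), \]
to show in $R$ (by the same carry mechanism, but now at the level of the formal symbols $X_k$) that every product $X_{j_1}\cdots X_{j_r}$ has a unique $A[Y]$-linear expansion in monomials $\prod_\ell X_\ell^{i_\ell}$ with each $i_\ell<q$, and then to specialize via $X_k\mapsto E_{n+k}$ and $Y\mapsto\theta^{q^n}$---a specialization compatible with the defining relations of $R$ precisely by Proposition \ref{propositionE}(3) together with the identity $[n+k+1]=(Y^{q^{k+1}}-\theta)|_{Y=\theta^{q^n}}$.
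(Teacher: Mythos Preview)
Your proof is correct and rests on the same two ingredients as the paper's: Proposition~\ref{propositionE}(3) in the form $E_m^q=E_m+[m+1]E_{m+1}$, and the observation that $[n+\ell]=(Y^{q^\ell}-\theta)|_{Y=\theta^{q^n}}$ is the specialization of a fixed polynomial in $A[Y]$. The organization differs: the paper sorts the tuple, locates a block of $q$ equal indices $j_{k+1}=\cdots=j_{k+q}$, applies the relation once to reduce $r$ by $q-1$, and inducts on $r$ directly; you instead peel off one factor at a time and isolate the combinatorics in a separate ``multiplication rule'' $G_{iq^n}E_{n+j}=\sum_k f_{i,j,k}(\theta^{q^n})G_{kq^n}$, which you prove by a carry cascade. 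Your packaging makes the finiteness bound $i\le q^{j_1}+\cdots+q^{j_r}$ cleaner to read off, and the auxiliary algebra $R=A[Y][X_0,X_1,\dots]/(X_k^q-X_k-(Y^{q^{k+1}}-\theta)X_{k+1})$ is a pleasant conceptual reformulation not present in the paper: it encodes exactly the statement that the monomials $\prod_k X_k^{i_k}$ with $0\le i_k<q$ form an $A[Y]$-basis, and the specialization $X_k\mapsto E_{n+k}$, $Y\mapsto\theta^{q^n}$ then delivers the lemma at once. The paper's route is slightly more economical in that it avoids the inner cascade induction, but both arguments are short and equivalent in strength.
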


\begin{proof}
We induct on the number of entries $r$ of the tuple $\underline{j} = (j_1,j_2,\dots,j_r)$. After reindexing, we may assume that $0 \leq j_1 \leq j_2 \leq \cdots \leq j_r$. 

If no $q$ consecutive entries are equal, i.e. $j_{k+1} = j_{k+2} = \cdots = j_{k+q}$ does not hold for any $k$, we have
\[ E_{n+j_1}E_{n+j_2} \cdots E_{n+j_r} = G_{q^n(q^{j_1}+q^{j_2}+\dots+q^{j_r})},\]
and we obtain a single non-zero polynomial $c_{\underline{j}, \sum q^{j_i}}(Y) := 1$. 

So, assume that we have $j_{k+1} = j_{k+2} = \cdots = j_{k+q}$, for some $0 \leq k \leq r-q$. Grouping these $E_i$'s together and applying (3) of Proposition \ref{propositionE} above, we obtain
\begin{align} \label{recurpfeq1}
\nonumber E_{n+j_1}E_{n+j_2} \cdots E_{n+j_r}  = & E_{n+j_1} \cdots E_{n+j_k} (E_{n+j_{k+1}})^q E_{n+j_{k+q+1}} \cdots E_{n+j_r} \\
= & E_{n+j_1} \cdots E_{n+j_k} E_{n+j_{k+1}} E_{n+j_{k+q+1}} \cdots E_{n+j_r} \\ 
\nonumber & + [n+j_{k+1}+1] E_{n+j_1} \cdots E_{n+j_k} E_{n+j_{k+1}+1} E_{n+j_{k+q+1}} \cdots E_{n+j_r}.
\end{align}

Now, both
\[e_1 := E_{n+j_1} \cdots E_{n+j_k} E_{n+j_{k+1}} E_{n+j_{k+q+1}} \cdots E_{n+j_r} \text{ and} \] 
\[e_2 := E_{n+j_1} \cdots E_{n+j_k} E_{n+j_{k+1}+1} E_{n+j_{k+q+1}} \cdots E_{n+j_r},\] which occur in the previous displayed line, both come from tuples with $r-(q-1)$ many entries, namely 
\[ \underline{j}_1 := (j_1,\dots,j_{k+1},j_{k+q+1},\dots,j_r) \quad \text{ and } \quad \underline{j}_2 := (j_1,\dots,j_{k},j_{k+1}+1,j_{k+q+1},\dots,j_r)  \] 
and hence by induction we deduce the existence of polynomials $\{ c_{\underline{j}_1,i}(Y)\}_{i \geq 0}$ and $\{c_{\underline{j}_2,i}(Y)\}_{i \geq 0}$ in $A[Y]$, all but finitely many of which are non-zero, such that
\[e_1 = \sum_{i \geq 0} c_{\underline{j}_1,i}(\theta^{q^n}) G_{i q^n} \text{ and } e_2 = \sum_{i \geq 0} c_{\underline{j}_2,i}(\theta^{q^n}) G_{i q^n},\]
for all $n \geq 0$. 

Returning with this to \eqref{recurpfeq1}, we obtain
\[E_{n+j_1}E_{n+j_2} \cdots E_{n+j_r} = \sum_{i \geq 0} \left( c_{\underline{j}_1,i}(Y) + (Y^{q^{j_{k+1}+1}} - \theta) c_{\underline{j}_2,i}(Y)\right)|_{Y = \theta^{q^n}} G_{i q^n}.\]
So we let $c_{\underline{j},i}(Y) := c_{\underline{j}_1,i}(Y) + (Y^{q^{j_{k+1}+1}} - \theta) c_{\underline{j}_2,i}(Y)$.
\end{proof}

\subsubsection{Interpolation properties}\label{step2}

Recall that $A(d)$ denotes the $\FF_q$-vector space of the polynomials of $A$ of degree strictly less than $d$ in $\theta$.
We consider, for $d\geq 1$, the element
$$\psi_{s,d} :=\sum_{a\in A(d)}\frac{a(t_1)\cdots a(t_s)}{z-a}\in\boldsymbol{F}_s\otimes_{\FF_q}K(z).$$
By Proposition \ref{propositionE} properties (1) and (2), 
$$N_{s,d}:=l_dE_d\psi_{s,d}$$
is the unique polynomial of $K[z,\undt{s}]$ of degree $<q^d$ in $z$ 
such that the associated map $\CC_\infty\rightarrow\CC_\infty[\undt{s}]$ which sends $z$ to the polynomial $N_{s,d}(z)$ interpolates the
map 
$$A(d) \ni a\mapsto a(t_1)\cdots a(t_s)\in \FF_q[\undt{s}].$$
In \cite{PER3}, the second author found several explicit formulas for the sums $\psi_s=\psi_{s,\infty}=\lim_{d\rightarrow\infty}\psi_{s,d}$ for small values of $s$ and in \cite{PEL&PER} the two authors 
of the present paper improved qualitatively the previous results without any restriction on $s$. In particular, the following formula holds, with $t=t_1$, valid for $d\geq 1$:
\begin{equation}\label{simplestcase}
N_{1,d}=\sum_{j=0}^{d-1}E_j(z)b_j(t)\in K[z,t].
\end{equation}
We also set $M_{s,d}=\prod_{i=1}^s(N_{1,d})_{t=t_i}$ and notice that, 
since $\deg_z(N_{1,d})=q^{d-1}$ by (1) of Proposition \ref{propositionE}, the degree in $z$ of $M_{s,d}$ is equal to $sq^{d-1}$. We can thus write, for $d\geq 1$:
\begin{eqnarray*}
M_{s,d} &=& \sum_{\underline{i}\leq d-1}E_{i_1}(z)\cdots E_{i_s}(z)b_{i_1}(t_1)\cdots b_{i_s}(t_s)\\
&=&\sum_{\underline{i} \leq d-1}\sum_{0\leq j\leq sq^{d-1}}\epsilon_{\underline{i},j}G_j(z)b_{i_1}(t_1)\cdots b_{i_s}(t_s)
\end{eqnarray*}
where, we expand 
\begin{equation} \label{epsilondefeq}
E_{i_1}\cdots E_{i_s}= \sum_{j}\epsilon_{\underline{i},j}G_j,
\end{equation} 
with $\epsilon_{\underline{i},j}\in K$, and where, $\underline{i} \leq n$ stands for the inequalities $i_j\leq n$ for $j=1,\ldots,s$ (and similarly for $\underline{i} \geq n$).

Since both $M_{s,d}$ and $N_{s,d}$ have the same interpolation property
(they interpolate the map $a\mapsto a(t_1)\cdots a(t_s)$ over $A(d)$), the polynomial 
$M_{s,d} - N_{s,d}$ vanishes for all $z=a\in A(d)$. This means that $E_d$ divides $M_{s,d} - N_{s,d}$.
But $\deg_z(N_{s,d})<q^d = \deg_z E_d$ from which we deduce that, for $d\geq 1$,
$$M_{s,d} - N_{s,d}=
\sum_{\underline{i} \leq d-1}\sum_{q^d\leq j\leq sq^{d-1}}\epsilon_{\underline{i},j}G_j(z)b_{i_1}(t_1)\cdots b_{i_s}(t_s)
,$$
and, in particular,
\[ N_{s,d} = \sum_{\underline{i} \leq d-1}\sum_{0\leq j\leq q^d - 1}\epsilon_{\underline{i},j}G_j(z)b_{i_1}(t_1)\cdots b_{i_s}(t_s). \]
Now we notice that, for $s\geq 2$, we have that $(M_{s,d}/E_d)_{z=0}=0$. On one hand, 
we have (recall that $s\equiv1\pmod{q-1}$):
$$\left(\frac{M_{s,d} - N_{s,d}}{l_dE_d}\right)_{z=0}= -\psi_{s,d}(0)= -\sum_{a\in A(d)}\frac{a(t_1)\cdots a(t_s)}{-a} = -F_d(1;s).$$
On the other hand we note that, for all $j>0$, if $\ell_q(j)\neq1$, then $(G_j/l_dE_d)_{z=0}=0$, and if $\ell_q(j)=1$,
then, for some $k\geq 0$, $j=q^k$ and $G_j=E_k$. Further, we have
$$\left(\frac{E_k}{l_dE_d}\right)_{z=0}=\frac{D_d}{D_k}\frac{\prod_{0\neq a\in A(k)}a}{\prod_{0\neq a\in A(d)}a}=
\frac{1}{l_k},$$
by \cite[\S 3.2]{GOS} or by (2) of Proposition \ref{propositionE}. 
Thus, we obtain
\begin{eqnarray*}
-F_d(1;s)
&=&\sum_{d\leq k\leq d-1+\lfloor\log_q(s)\rfloor}l_k^{-1}\sum_{\underline{i} \leq d-1} \epsilon_{\underline{i},q^k}
b_{i_1}(t_1)\cdots b_{i_s}(t_s)\\
&=&\sum_{\underline{i} \leq d-1}b_{i_1}(t_1)\cdots b_{i_s}(t_s)\sum_{d\leq k\leq d-1+\lfloor\log_q(s)\rfloor}l_k^{-1}
\epsilon_{\underline{i},q^k}.
\end{eqnarray*}
By \cite[Proposition 10]{ANG&PEL} we see that, for all $i=1,\ldots,s$ and all $d\geq m$, if $r=0,\ldots,d-m-1$, then $F_d(1;s)|_{t_i=\theta^{q^r}}=0$
 (if $d=m$ the property is void). Since the family of polynomials $(b_{i_1}(t_1)\cdots b_{i_s}(t_s))_{\underline{i}\geq 0}$
is a basis of the $K$-vector space $K[\undt{s}]$, this means that, for all $d\geq m$,
$$-F_d(1;s)=\sum_{d-m\leq\underline{i} \leq d-1}b_{i_1}(t_1)\cdots b_{i_s}(t_s)\sum_{d\leq k\leq d+\lfloor\log_q(s)\rfloor - 1}l_k^{-1}\epsilon_{\underline{i},q^k}.$$
We can rewrite this identity as follows, for all $d\geq m$:
\begin{equation}\label{fdexpr}
-F_d(1;s)=\sum_{h=0}^{\lfloor\log_q(s)\rfloor-1}\frac{1}{l_{d+h}} \sum_{0\leq j_1,\ldots,j_s\leq m}
\epsilon_{(d-m+j_1,\ldots,d-m+j_s),q^{d+h}}b_{d-m+j_1}(t_1)\cdots b_{d-m+j_s}(t_s),
\end{equation}
and this puts us in the situation of Lemma \ref{univcoeffslem}, with $n = d - m$, $r = s$ and $\underline{j} = (j_1,\dots,j_s)$. Thus, we see from \eqref{epsilondefeq} and the aforementioned lemma that 
\[\epsilon_{(d-m+j_1,\ldots,d-m+j_s),q^{d+h}} = c_{\underline{j},q^{h+m}}|_{Y = \theta^{q^{d-m}}}.\] 
Finally, for each $h$ and $\underline{j}$, as above, we have
\[\frac{l_{d-1}}{\prod_{i = 1}^s b_{d-m}(t_i)}\frac{\prod_{i = 1}^sb_{d-m+j_i}(t_i)}{l_{d+h}} = \left. \frac{\prod_{i = 1}^s (t_i - Y)(t_i - Y^q)\cdots (t_i - Y^{q^{j_i-1}}) }{(\theta- Y^{q^m})(\theta- Y^{q^{m+1}})\cdots (\theta- Y^{q^{m+h}})}\right|_{Y = \theta^{q^{d-m}}}. \]
Thus, letting $w_{\underline{j},s} := \frac{\prod_{i = 1}^s (t_i - Y)(t_i - Y^q)\cdots (t_i - Y^{q^{j_i-1}}) }{(\theta- Y^{q^m})(\theta- Y^{q^{m+1}})\cdots (\theta- Y^{q^{m+h}})}$, we obtain
\begin{equation} \label{qualitativeformula} \frac{l_{d-1}}{\prod_{i = 1}^s b_{d-m}(t_i)} F_d(1;s) = -\sum_{h=0}^{\lfloor\log_q(s)\rfloor-1} \sum_{0\leq j_1,\ldots,j_s\leq m} (c_{\underline{j},q^{h+m}} w_{\underline{j},s})|_{Y = \theta^{q^{d-m}}}, 
\end{equation}
completing the proof of the first part of Theorem \ref{Theorem2}, namely, that
$\mathbb{H}_s$ exists, and is a rational fraction of $K(Y)[\undt{s}]$.

\begin{Remark}{\em We provide, additionally, formulas which show how the 
functions $\psi_{s,d}$ can be viewed as generating series of the sums $F_d(n;s)$.
Let $d>0 $ be fixed. If $z\in\CC_\infty$ is such that
$|z|>q^{d-1}$, then, for all $a\in A$ with $\deg_\theta(a)<d$, we have
$\frac{1}{z-a}=\frac{1}{z}\frac{1}{1-\frac{a}{z}}=\frac{1}{z}\sum_{i\geq 0}\left(\frac{a}{z}\right)^i$.
Hence,
$$\psi_{s,d}(z)=\sum_{a\in A(d)}\frac{a(t_1)\cdots a(t_s)}{z-a}=-\sum_{\tiny{\begin{array}{c}i\geq 0\\ s+i\equiv0 \\ \pmod{q-1}\end{array}}}z^{-1-i}F_d(-i;s).$$
Similarly, if $s>0$, $|z|<1$ and $a\neq0$, we have that 
$\frac{1}{z-a}=-\frac{1}{a}\frac{1}{1-\frac{z}{a}}=-\frac{1}{a}\sum_{j\geq 0}\left(\frac{z}{a}\right)^j$
and $$\psi_{s,d}(z)=\sum_{\tiny{\begin{array}{c}j\geq 0\\ j+1-s\equiv0 \\ \pmod{q-1}\end{array}}}z^jF_d(1+j;s).$$
 }\end{Remark}

\subsubsection{Conclusion of the proof of Theorem \ref{Theorem2}}\label{step4}

\begin{Proposition}\label{simple} Assume that $s\geq q$ and $s\equiv1\pmod{q-1}$. Then, for all 
$d\geq m$, we have that
$$\mathbb{H}_s|_{Y=\theta^{q^{d-m}}}=H_{s,d},$$ where $H_{s,d}$ is a non-zero polynomial of
$A[\underline{t}_s]$ of degree $m-1=\frac{s-q}{q-1}$
in $t_i$ for all $i=1,\ldots,s$.
\end{Proposition}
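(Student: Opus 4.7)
The proof of Proposition \ref{simple} splits naturally into three steps: realize $H_{s,d}$ concretely as a quotient, bound its degree in each $t_i$ from above, and compute a single explicit leading coefficient that gives both non-vanishing and the exact degree simultaneously.

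\textbf{Step 1 (polynomiality).} Equation \eqref{qualitativeformula} of \S\ref{step2} already identifies
\[ H_{s,d} = \mathbb{H}_s|_{Y=\theta^{q^{d-m}}} = \frac{l_{d-1}\,F_d(1;s)}{\prod_{i=1}^s b_{d-m}(t_i)} \]
as a rational fraction. By the vanishing property $F_d(1;s)|_{t_i = \theta^{q^r}} = 0$ for $r = 0, \ldots, d-m-1$ (\cite[Proposition 10]{ANG&PEL}, already invoked in \S\ref{step2}), each linear factor $(t_i - \theta^{q^r})$ of $b_{d-m}(t_i)$ divides $F_d(1;s)$ in $K[\undt{s}]$; since these factors involve pairwise distinct variables, the whole product $\prod_i b_{d-m}(t_i)$ divides $l_{d-1}F_d(1;s)$ in $K[\undt{s}]$. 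This divisor being monic in each $t_i$, iterated Euclidean division gives $H_{s,d} \in A[\undt{s}]$.

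\textbf{Step 2 (degree bound).} Since $\deg_{t_j} S_i(1;s) \leq i$, we have $\deg_{t_j} F_d(1;s) \leq d-1$. Comparing degrees in $t_j$ in the identity $l_{d-1} F_d(1;s) = \prod_i b_{d-m}(t_i) \cdot H_{s,d}$ and using $\deg_{t_j} b_{d-m}(t_j) = d-m$ gives $\deg_{t_j} H_{s,d} \leq m-1$.

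\textbf{Step 3 (exact degree and non-vanishing).} Compute the coefficient of $\prod_{j=1}^s t_j^{d-1}$ in $F_d(1;s)$. Only the summand $S_{d-1}(1;s)$ contributes, and for monic $a \in A^+(d-1)$ the polynomial $a(t_j) = t_j^{d-1} + (\text{lower powers of } t_j)$, so the coefficient equals
\[ \sum_{a \in A^+(d-1)} a^{-1} = S_{d-1}(1;0) = \frac{1}{l_{d-1}}, \]
a classical power-sum identity (the case $H_1 = 1$ of \cite[(3.7.4)]{AND&THA}; see also \cite[Chapter 3]{GOS}). Given the degree bound of Step~2, the only way to produce $\prod_j t_j^{d-1}$ on the right-hand side of $l_{d-1}F_d(1;s) = \prod_i b_{d-m}(t_i)\,H_{s,d}$ is to pair the leading monomial $\prod_j t_j^{d-m}$ of $\prod_i b_{d-m}(t_i)$ (which has coefficient $1$) with the monomial $\prod_j t_j^{m-1}$ in $H_{s,d}$. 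Therefore the coefficient of $\prod_j t_j^{m-1}$ in $H_{s,d}$ equals $l_{d-1}\cdot l_{d-1}^{-1} = 1$. This forces $H_{s,d} \neq 0$ and $\deg_{t_j} H_{s,d} = m-1$ for each $j$, completing the proof.

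The only place requiring real care is Step~1, where one must verify the divisibility in $A[\undt{s}]$ rather than merely $K[\undt{s}]$; monicity of each $b_{d-m}(t_i)$ in its variable handles this without difficulty. After that, tracking a single multi-degree monomial through the factorization resolves both non-vanishing and the exact degree in one stroke, with no case analysis required.
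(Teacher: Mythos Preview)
Your proof is correct and follows essentially the same approach as the paper's: both invoke \cite[Proposition~10]{ANG&PEL} for the divisibility, then read off the degree in each $t_i$ by subtraction. Your Step~3 makes explicit what the paper only asserts (``one easily calculates\ldots the degree in $t_i$ of $F_d(1;s)$ equals $d-1$''), and in fact your computation of the leading coefficient $1$ of $\prod_j t_j^{m-1}$ in $H_{s,d}$ is something the paper states separately (just before Theorem~\ref{proppowersums}) but does not include in its own proof of the proposition.
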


\begin{proof}
 We note that $l_{d-1}F_d(1;s)\in A[\undt{s}]$ and that, for $d\geq m$,
the polynomial $l_{d-1}F_d(1;s)$ is divisible by $b_{d-m}(t_1)\cdots b_{d-m}(t_s)$ in virtue of 
\cite[Proposition 10]{ANG&PEL}. 

One easily calculates, for each $1 \leq i \leq s$, that the degree in $t_i$ of $F_d(1;s)$ equals $d-1$, and the degree of $b_{d-m}(t_i)$ in $t_i$ equals $d-m$. Since $F_d(1;s)/\Pi_{s,d} \in A[\underline{t}_s]$, we must have that the degree in $t_i$ of this ratio is $d-1 - (d-m) = m-1.$
\end{proof}

\begin{Remark} \label{degHsd}
{\em The degree of $H_{s,d}$ in $\theta$ can be easily deduced, for all $d \geq m$, from the arguments in the proof of \cite[Proposition 11]{ANG&PEL}.
We obtain for this degree 
$$\delta_{s,d}:=\frac{q^d-q}{q-1}-s\frac{q^{d-m}-1}{q-1}= m-1+\mu q^{d-m}$$ in $\theta$, where $\mu = \frac{q^m - 1}{q-1} - m$.
To verify the displayed formula for $\delta_{s,d}$, write $s=m(q-1)+1$ and observe that $m\geq 1$ because $s\geq q$. Since we will not use the degree in $\theta$ of $H_{s,d}$ for small $d$ to follow, we do not give the details of this argument here. }
\end{Remark}

\subsubsection*{Integrality of $\mathbb{H}_s$}

We need an elementary fact.

\begin{Lemma}\label{lemmau}
Let $\mathbb{U}=\mathbb{U}(Y)$ be a polynomial of $A[Y][\undt{s}]$ and $M\geq 0$ and integer such that, for all 
$d$ big enough, 
$$\frac{\mathbb{U}(\theta^{q^d})}{\theta^{q^{d+M}}-\theta}\in A[\undt{s}].$$
Then, $\mathbb{U}=(Y^{q^M}-\theta)\mathbb{V}$ with $\mathbb{V}\in A[Y][\undt{s}]$.
\end{Lemma}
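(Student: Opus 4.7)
The plan is to carry out Euclidean division of $\mathbb{U}(Y)$ by $Y^{q^M}-\theta$ as polynomials in $Y$. Since $Y^{q^M}-\theta$ is monic in $Y$ with coefficients in $A$, we may divide within $A[\underline{t}_s][Y]$ and write
\[ \mathbb{U}(Y) = (Y^{q^M}-\theta)\mathbb{V}(Y) + \mathbb{R}(Y), \qquad \mathbb{V},\mathbb{R}\in A[\underline{t}_s][Y], \quad \deg_Y(\mathbb{R})<q^M. \]
The goal then reduces to showing $\mathbb{R}=0$, because the equality above then yields the claim with $\mathbb{V}\in A[Y][\underline{t}_s]$ as required.

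To show $\mathbb{R}=0$, I would substitute $Y=\theta^{q^d}$, giving
\[ \mathbb{U}(\theta^{q^d}) - (\theta^{q^{d+M}}-\theta)\mathbb{V}(\theta^{q^d}) = \mathbb{R}(\theta^{q^d}). \]
By hypothesis, for all $d$ large, $\theta^{q^{d+M}}-\theta$ divides the left-hand side in $A[\underline{t}_s]$, hence it divides $\mathbb{R}(\theta^{q^d})$ in $A[\underline{t}_s]=\FF_q[\underline{t}_s][\theta]$. Writing $\mathbb{R}(Y)=\sum_{i=0}^{q^M-1}r_i(\theta,\underline{t}_s)\,Y^i$ and letting $C:=\max_i\deg_\theta(r_i)$, one has
\[ \deg_\theta\bigl(\mathbb{R}(\theta^{q^d})\bigr) \le (q^M-1)q^d + C, \]
whereas the divisor $\theta^{q^{d+M}}-\theta$ has $\theta$-degree exactly $q^{M+d}=q^M\cdot q^d$. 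For $d$ large enough (precisely, $q^d>C$), the degree of the dividend is strictly less than that of the divisor, so the quotient must vanish: $\mathbb{R}(\theta^{q^d})=0$.

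Since $\mathbb{R}(Y)\in A[\underline{t}_s][Y]$ is a polynomial in $Y$ that vanishes at the infinitely many distinct values $\theta^{q^d}$ (for all sufficiently large $d$), and $A[\underline{t}_s]$ is a domain, we conclude $\mathbb{R}=0$ in $A[\underline{t}_s][Y]$. This finishes the proof. There is no substantive obstacle here: the only technical point is bookkeeping of the degree in $\theta$ of $\mathbb{R}(\theta^{q^d})$ versus the degree of $\theta^{q^{d+M}}-\theta$, which works cleanly because the partial specialization $Y\mapsto\theta^{q^d}$ brings each monomial $Y^i$ (with $i<q^M$) to a power of $\theta$ smaller than $q^{M+d}$, so the only way the divisibility can hold is for the remainder to be identically zero.
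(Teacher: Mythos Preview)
Your proof is correct and follows essentially the same approach as the paper: Euclidean division by the monic polynomial $Y^{q^M}-\theta$, followed by a degree-in-$\theta$ comparison showing the remainder evaluates to zero at $Y=\theta^{q^d}$ for all large $d$, and then the observation that a polynomial in $Y$ with infinitely many roots in the domain $A[\underline{t}_s]$ must vanish. The paper merely organizes these steps in the reverse order (treating the low-degree case first, then invoking Euclidean division), but the content is the same.
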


\begin{proof} If $\mathbb{U}$ has degree $<q^M$ in $Y$, then, for all $d$ big enough,
$\deg_\theta(\mathbb{U}(\theta^{q^d}))\leq C+q^d(q^M-1)$ with $C$ a constant depending on $\mathbb{U}$.
Therefore, as $d$ tends to infinity,
$$\deg_\theta\left(\frac{\mathbb{U}(\theta^{q^d})}{\theta^{q^{d+M}}-\theta}\right)\leq C+q^d(q^M-1)-q^{d+M}\rightarrow-\infty,$$ which implies that $\mathbb{U}(\theta^{q^d})=0$ for all $d$ big enough, and 
$\mathbb{U}=0$ identically because the set $\{\theta^{q^d};d\geq d_0\}\subset\CC_\infty$ is Zariski-dense for all 
$d_0$. This proves the Lemma in this case.

Now, if $\mathbb{U}$ has degree in $Y$ which is $\geq q^M$, we can write, by 
euclidean division (the polynomial $Y^{q^M}-\theta$ is monic in $Y$), $\mathbb{U}=(Y^{q^M}-\theta)\mathbb{V}+\mathbb{W}$ with 
$\mathbb{V},\mathbb{W}\in A[Y][\undt{s}]$ and $\deg_Y(\mathbb{W})<q^M$.
Hence, by the first part of the proof, $\mathbb{W}=0$.
\end{proof}

First we show that $\mathbb{H}_s\in A[Y][\undt{s}]$.
Indeed, by (\ref{qualitativeformula}), we can write
$$\mathbb{H}_s(Y)=\frac{\mathbb{U}(Y)}{(\theta-Y^{q^m})\cdots(\theta-Y^{q^{m+\kappa_0}})},$$
where $\kappa_0=\lfloor\log_q(s)\rfloor-1$ and $\mathbb{U}$ is a polynomial in $A[Y][\undt{s}]$. By Proposition \ref{simple},  $\HH_s(\theta^{q^{j}}) \in A[\underline{t}_s]$, for all $j \geq 0$, and we deduce that $\frac{\mathbb{U}(\theta^{q^j})}{(\theta-\theta^{q^{j+k}})} \in A[\underline{t}_s]$, for each $m \leq k \leq m+\kappa_0$ and all $j \geq 0$.
By Lemma \ref{lemmau}, we conclude that $\frac{\mathbb{U}(Y)}{(\theta-Y^{q^{k}})} \in A[\underline{t}_s,Y]$, for each $m \leq k \leq m + \kappa_0$.
Since the polynomials $\theta-Y^{q^{m}}, \theta-Y^{q^{m+1}}, \dots, \theta-Y^{q^{m+\kappa_0}}$ are relatively prime, it follows that $\mathbb{H}_s\in A[Y][\undt{s}]$, as claimed. 

We end the proof of Theorem \ref{Theorem2} by verifying the quantitative data using Proposition \ref{simple}. We set $\nu=m-1$ (note that $\mu=0$ in case $s=q$), so that 
$\deg_{t_i}(H_{s,d})=\nu$, for all $d\geq m$. Writing $\mathbb{H}_s=\sum_{\underline{i}}c_{\underline{i}}(Y)\underline{t}^{\underline{i}}$
with $c_{\underline{i}}(Y)\in A[Y]$ and $H_{s,d}=\sum_{\underline{i}}c'_{\underline{i},d}\underline{t}^{\underline{i}}$
with $c'_{\underline{i},d}\in A$
(\footnote{We are adopting multi-index notations, so that, if $\underline{i}=(i_1,\ldots,i_s)\in\NN^s$, then 
$\underline{t}^{\underline{i}}=t_1^{i_1}\cdots t_s^{i_s}$.}),
we have $$c_{\underline{i}}(\theta^{q^{d-m}})=c'_{\underline{i},d},$$
for all $d\geq m$ and for all $\underline{i}$. This means that $\deg_{t_i}(\mathbb{H}_s)=\deg_{t_i}(H_{s,d})=\nu$
for all $d\geq m$ and $i=1,\ldots,s$. This confirms the data on the degree in $t_i$ for all $i$.
Now that we know $\HH_s \in A[\underline{t}_s,Y]$, the claim on the degree of $\HH_s$ in $Y$ follows from the proof of Theorem \ref{theomathbs} below; specifically, see Lemma \ref{lemmalimit}. \hfill \CVD

\subsection{Proof of Theorem \ref{theomathbs}} We suppose that $s\geq q$ is an integer such that $s\equiv1\pmod{q-1}$.
We know from \cite{ANG&PEL,APTR} that, in Theorem \ref{anglespellarin},
$$\mathbb{B}_s=(-1)^{\frac{s-1}{q-1}}\lambda_{1,s}\in A[\underline{t}_s],$$ a polynomial which is monic of degree $\frac{s-q}{q-1}$ in $\theta$. We choose a root $(-\theta)^{\frac{1}{q-1}}$ of $-\theta$ and we set:
\begin{eqnarray*}
\widetilde{\pi}_d&:=&\theta(-\theta)^{\frac{1}{q-1}}\prod_{i=1}^{d-1}\left(1-\frac{\theta}{\theta^{q^i}}\right)^{-1}\in (-\theta)^{\frac{1}{q-1}} K^\times,\\
\omega_d(t)&:=&(-\theta)^{\frac{1}{q-1}}\prod_{i=0}^{d-1}\left(1-\frac{t}{\theta^{q^i}}\right)^{-1}\in(-\theta)^{\frac{1}{q-1}}  K(t)^\times.\end{eqnarray*}
Then, in $\TT_s$, $$\lim_{d\rightarrow\infty}\frac{\widetilde{\pi}_d}{\omega_{d-m}(t_1)\cdots\omega_{d-m}(t_s)}=
\frac{\widetilde{\pi}}{\omega(t_1)\cdots\omega(t_s)}.$$ We note that $\deg_\theta(\widetilde{\pi}_d)=\frac{q}{q-1}$
and $\deg_\theta(\omega_d(t))=\frac{1}{q-1}$. 

We recall that we have set $\delta_{s,d}:=\frac{q^d-q}{q-1}-s\frac{q^{d-m}-1}{q-1}$ and that $\delta_{s,d}= m-1+\mu q^{d-m}$, and we stress that we do not use the connection with $H_{s,d}$ here.

\begin{Lemma}\label{lemmaident}
We have, in $K[\underline{t}_s]$, that:
$$\frac{\widetilde{\pi}_d}{\omega_{d-m}(t_1)\cdots\omega_{d-m}(t_s)}=-(-\theta)^{\delta_{s,d}-m+1}\frac{b_{d-m}(t_1)\cdots b_{d-m}(t_s)}{l_{d-1}},\quad d\geq m.$$
\end{Lemma}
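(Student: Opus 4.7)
The identity is a direct computational check: substitute the definitions of $\widetilde{\pi}_d$ and $\omega_{d-m}(t_j)$, simplify each factor, and collate powers. The key elementary moves are
\[
1-\frac{\theta}{\theta^{q^i}}=\frac{[i]}{\theta^{q^i}} \quad (i\geq 1), \qquad 1-\frac{t_j}{\theta^{q^i}} = -\frac{t_j - \theta^{q^i}}{\theta^{q^i}} \quad (i\geq 0),
\]
together with the product expressions $l_n=\prod_{i=1}^n(\theta-\theta^{q^i})=(-1)^n\prod_{i=1}^n[i]$ and $b_n(t)=\prod_{i=0}^{n-1}(t-\theta^{q^i})$, and the telescoping $\prod_{i=0}^{n-1}\theta^{q^i}=\theta^{(q^n-1)/(q-1)}$. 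Assembling these, I obtain the closed forms
\[
\widetilde{\pi}_d = (-1)^{d-1}(-\theta)^{\frac{1}{q-1}}\frac{\theta^{(q^d-1)/(q-1)}}{l_{d-1}}, \qquad \omega_n(t_j) = (-1)^n(-\theta)^{\frac{1}{q-1}}\frac{\theta^{(q^n-1)/(q-1)}}{b_n(t_j)}.
\]

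Forming the ratio with $n=d-m$ and $s$ copies in the denominator, the combined fractional-power contribution becomes $((-\theta)^{1/(q-1)})^{1-s} = (-\theta)^{(1-s)/(q-1)} = (-\theta)^{-m}$ (since $s-1 = m(q-1)$), so the $(q-1)$-th root disappears and we land in $K[\undt{s}]$ as claimed. The combined $\theta$-exponent simplifies to $\delta_{s,d}+1$ directly from the definition of $\delta_{s,d}$, and multiplying by $(-\theta)^{-m}=(-1)^m\theta^{-m}$ gives $(-1)^m\theta^{\delta_{s,d}-m+1}$. Identifying the remaining factor with $\prod_j b_{d-m}(t_j)/l_{d-1}$ yields
\[
\frac{\widetilde{\pi}_d}{\prod_{j=1}^s\omega_{d-m}(t_j)} = (-1)^{d-1+s(d-m)+m}\,\theta^{\delta_{s,d}-m+1}\,\frac{\prod_{j=1}^s b_{d-m}(t_j)}{l_{d-1}},
\]
while the right-hand side of the lemma expands as $(-1)^{\delta_{s,d}-m}\theta^{\delta_{s,d}-m+1}\prod_j b_{d-m}(t_j)/l_{d-1}$.

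It remains to verify the congruence $d-1+s(d-m)+m\equiv \delta_{s,d}-m \pmod 2$. In characteristic $2$ this is vacuous. Otherwise $q$ is odd, so $s = 1+m(q-1)$ is odd and $s(d-m)\equiv d-m \pmod 2$, giving the left exponent parity $2d-1\equiv 1$. For the right side, $\delta_{s,d}-m+1 = \mu q^{d-m}$; since $\mu = 1+q+\cdots+q^{m-1}-m \equiv m - m = 0 \pmod 2$ (each $q^i$ being odd) and $q^{d-m}$ is odd, $\delta_{s,d}-m+1$ is even, so $\delta_{s,d}-m\equiv 1\pmod 2$, and the two parities agree. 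There is no conceptual obstacle—the lemma is pure bookkeeping—and the only delicate point is the consistent handling of the fixed $(q-1)$-th root $(-\theta)^{1/(q-1)}$, which is treated as a formal symbol $\alpha$ with $\alpha^{q-1}=-\theta$; the cancellation $\alpha\cdot\alpha^{-s}=\alpha^{1-s}=(-\theta)^{-m}$ is precisely what certifies that the identity takes place inside $K[\undt{s}]$.
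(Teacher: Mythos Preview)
Your proof is correct and follows essentially the same route as the paper's: both substitute the definitions of $\widetilde{\pi}_d$ and $\omega_{d-m}$, collapse the products into $l_{d-1}$, $b_{d-m}$ and a power, and then identify the exponent with $\delta_{s,d}-m+1$. The only difference is bookkeeping: the paper keeps all scalar factors as powers of $(-\theta)$ throughout (writing $\widetilde{\pi}_d=-(-\theta)^{q^d/(q-1)}l_{d-1}^{-1}$ and $\omega_n(t)=(-\theta)^{q^n/(q-1)}b_n(t)^{-1}$), so the sign comes out automatically, whereas you split into separate powers of $-1$ and $\theta$ and then match signs by a short parity argument.
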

\begin{proof}
We note that
\begin{eqnarray*}
\widetilde{\pi}_d&=&\theta(-\theta)^{\frac{1}{q-1}}\prod_{i=1}^{d-1}
\frac{-\theta^{q^i}}{\theta-\theta^{q^i}}\\ 
&=&-((-\theta)(-\theta)^{\frac{q^d-q}{q-1}}(-\theta)^{\frac{1}{q-1}})l_{d-1}^{-1}\\
&=&-(-\theta)^{\frac{q^d}{q-1}}l_{d-1}^{-1},\end{eqnarray*}
and that
\begin{eqnarray*}
\omega_d(t)&=&(-\theta)^{\frac{1}{q-1}}\prod_{i=0}^{d-1}\left(\frac{-\theta^{q^i}}{t-\theta^{q^i}}\right)\\
&=&(-\theta)^{\frac{1}{q-1}}(-\theta)^{\frac{q^d-1}{q-1}}b_d(t)^{-1}\\
&=&(-\theta)^{\frac{q^d}{q-1}}b_d(t)^{-1},
\end{eqnarray*}
so that 
\begin{eqnarray*}
\frac{\widetilde{\pi}_d}{\omega_{d-m}(t_1)\cdots \omega_{d-m}(t_s)}&=&-(-\theta)^{\frac{q^d}{q-1}}l_{d-1}^{-1}
(-\theta)^{-\frac{sq^{d-m}}{q-1}}b_{d-m}(t_1)\cdots b_{d-m}(t_s)\\
&=&-(-\theta)^{\frac{q^d}{q-1}-\frac{sq^{d-m}}{q-1}}\frac{b_{d-m}(t_1)\cdots b_{d-m}(t_s)}{l_{d-1}}\\
&=&-(-\theta)^{\delta_{s,d}-m+1}\frac{b_{d-m}(t_1)\cdots b_{d-m}(t_s)}{l_{d-1}},
\end{eqnarray*}
because $\frac{q^d}{q-1}-\frac{sq^{d-1}}{q-1}=\delta_{s,d}+\frac{q}{q-1}-\frac{s}{q-1}=\delta_{s,d}+\frac{q-s}{q-1}=\delta_{s,d}-m+1$. 
\end{proof}

The following lemma concludes the proof of Theorem \ref{theomathbs} and supplies the degree in $Y$ of $\HH_s$.

\begin{Lemma}\label{lemmalimit}
We have, in $\TT_s$:
$$\lim_{d\rightarrow\infty}\theta^{-q^{d-m}\mu}\HH_s(\theta^{q^{d-m}})=-\lambda_{1,s}.$$
Hence, the degree in $Y$ of $\HH_s$ equals $\mu :=  \frac{q^m-1}{q-1}-m$. 
\end{Lemma}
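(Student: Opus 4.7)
The plan is to combine the formula $F_d(1;s) = \Pi_{s,d}\,\mathbb{H}_s(\theta^{q^{d-m}})$ of Theorem \ref{Theorem2} with the explicit expression for $\Pi_{s,d}$ supplied by Lemma \ref{lemmaident}, and pass to the limit as $d \to \infty$ in $\mathbb{T}_s$, invoking Theorem \ref{anglespellarin} together with the standard convergences $\widetilde{\pi}_d \to \widetilde{\pi}$ and $\omega_{d-m}(t_i) \to \omega(t_i)$.

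Concretely, substituting Lemma \ref{lemmaident} into the identity of Theorem \ref{Theorem2} and using $\delta_{s,d} - m + 1 = \mu q^{d-m}$, I obtain
\[
(-\theta)^{-\mu q^{d-m}}\mathbb{H}_s(\theta^{q^{d-m}}) \;=\; -\, F_d(1;s)\,\frac{\omega_{d-m}(t_1)\cdots\omega_{d-m}(t_s)}{\widetilde{\pi}_d}.
\]
The right-hand side is a convergent sequence in $\mathbb{T}_s$, whose limit by Theorem \ref{anglespellarin} equals $-\zeta_A(1;s)\,\omega(t_1)\cdots\omega(t_s)/\widetilde{\pi} = -\lambda_{1,s}$. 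To obtain the form stated in the lemma, I must replace $(-\theta)^{-\mu q^{d-m}}$ by $\theta^{-\mu q^{d-m}}$: in characteristic $2$ there is nothing to do, and in odd characteristic $q$ is odd, so the parity of $\mu q^{d-m}$ coincides with that of $\mu = \sum_{i=0}^{m-1}(q^i - 1)$, which is even (each summand is even). Hence $(-1)^{\mu q^{d-m}} = 1$, and the first assertion of the lemma follows.

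The degree statement is then immediate. Writing $\mathbb{H}_s = \sum_{i=0}^{D} c_i(\underline{t}_s)\,Y^i$ with $c_D \in A[\underline{t}_s]\setminus\{0\}$, the identity
\[
\theta^{-\mu q^{d-m}}\mathbb{H}_s(\theta^{q^{d-m}}) \;=\; c_D\,\theta^{(D-\mu)q^{d-m}} + (\text{terms of smaller $\theta$-degree})
\]
can have the nonzero finite limit $-\lambda_{1,s}$ only when $D = \mu$, in which case the limit equals $c_\mu = -\lambda_{1,s}$. This simultaneously yields $\deg_Y\mathbb{H}_s = \mu$, completing the open degree claim of Theorem \ref{Theorem2}, and identifies $c_\mu = -\lambda_{1,s}$, which is exactly Theorem \ref{theomathbs}.

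The only delicate point of the argument is bookkeeping the various sign contributions coming from the auxiliary factor $(-\theta)^{1/(q-1)}$ hidden in the definitions of $\widetilde{\pi}_d$ and $\omega_d(t)$; once the exponent is simplified to $\mu q^{d-m}$ via Lemma \ref{lemmaident} and the parity of $\mu$ is noted, everything collapses to a clean limit computation and a leading-coefficient extraction.
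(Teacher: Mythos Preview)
Your proof is correct and follows essentially the same approach as the paper: combine the identity $F_d(1;s)=\Pi_{s,d}\mathbb{H}_s(\theta^{q^{d-m}})$ with Lemma~\ref{lemmaident} (which gives $\delta_{s,d}-m+1=\mu q^{d-m}$), take the limit using Theorem~\ref{anglespellarin} and the convergences $\widetilde{\pi}_d\to\widetilde{\pi}$, $\omega_{d-m}(t_i)\to\omega(t_i)$, and dispose of the sign via the evenness of $\mu$. The degree conclusion is drawn in the same way, using that $\mathbb{H}_s$ has already been shown to lie in $A[Y,\underline{t}_s]$.
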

\begin{proof}
By Theorem \ref{anglespellarin}, we have $\lambda_{n,s}=\lambda_{1,s}\in A[\undt{s}]$, which is a polynomial of degree $m-1 = \frac{s-q}{q-1}$ in $\theta$, and by

Lemma \ref{lemmaident} that
\begin{eqnarray*}
\lambda_{1,s}&=& \frac{\omega(t_1)\cdots\omega(t_s)}{\widetilde{\pi}}\lim_{d\rightarrow\infty}F_d(1;s)\\
&=&\lim_{d\rightarrow\infty}\frac{\omega_{d-m}(t_1)\cdots\omega_{d-m}(t_s)}{\widetilde{\pi}_d}\frac{b_{d-m}(t_1)\cdots b_{d-m}(t_s)}{l_{d-1}}\HH_s(\theta^{q^{d-m}})\\
&=&-\lim_{d\rightarrow\infty}(-\theta)^{m-1-\delta_{s,d}}\HH_s(\theta^{q^{d-m}})\\
&=& -\lim_{d\rightarrow\infty}\theta^{-q^{d-m}\mu}\HH_s(\theta^{q^{d-m}})
\end{eqnarray*}
(observe that $\mu\in\ZZ$ is divisible by 2).

Finally, since we have shown that $\HH_s$ is a polynomial in $Y$, the claim on the degrees is now clear.
\end{proof}

\subsection{Proof of Theorem \ref{theoremhighercoefficients}} 
For convenience of the reader, we recall that we have set:
$$\Gamma_d=\frac{\prod_{i\geq d}\left(1-\frac{\theta}{\theta^{q^i}}\right)}{\prod_{i\geq d-m}\prod_{j=1}^s\left(1-\frac{t_j}{\theta^{q^i}}\right)}\in\TT_s(K_\infty).$$
We need the following result where we suppose that $s=1+m(q-1)$ with $m>0$.

\begin{Lemma}\label{gammasr}
For any integer $r$ with $0\leq r\leq \mu$ there exists a polynomial $$\Gamma_{s,r}\in A[\underline{t}_s,\theta][Y],$$ monic of degree $\mu-r$ in $Y$,
such that, for all $d\geq m$,  
\begin{equation}\label{gammar}
\theta^{(\mu-r)q^{d-m}}\Gamma_d=\Gamma_{s,r}(\theta^{q^{d-m}})+w_d,\end{equation}
where $(w_d)_{d\geq m}$ is a sequence of elements of $\TT_s(K_\infty)$ which tends to zero as
$d$ tends to infinity.
\end{Lemma}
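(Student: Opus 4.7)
The plan is to write everything in terms of the variable $Y = \theta^{q^{d-m}}$ and exploit the fact that, under this substitution, $\Gamma_d$ becomes an explicit power series in $Y^{-1}$ whose coefficients do not depend on $d$. Concretely, since $\theta^{q^i} = Y^{q^{i-(d-m)}}$ for all $i \geq d-m$, a direct reindexing gives
$$\Gamma_d = \prod_{k \geq m} \bigl(1 - \theta Y^{-q^k}\bigr) \cdot \prod_{k \geq 0} \prod_{j=1}^s \bigl(1 - t_j Y^{-q^k}\bigr)^{-1}.$$
I would then expand each factor geometrically in $Z := Y^{-1}$, obtaining a formal expansion
$$\Gamma_d = \sum_{n \geq 0} \gamma_n Z^n, \quad \gamma_n = \sum_{\substack{S \subseteq \{m,m+1,\dots\},\, (a_{j,k}) \\ \sum_{k \in S} q^k + \sum_{j,k} a_{j,k}q^k = n}} (-\theta)^{|S|} \prod_{j=1}^s t_j^{\sum_k a_{j,k}} \ \in A[\underline{t}_s],$$
which is a finite sum for each $n$, is independent of $d$, and satisfies $\gamma_0 = 1$.

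The next step is to control $|\gamma_n|$ in the Gauss norm of $\TT_s(K_\infty)$. Since each element of $S$ is $\geq m$, every factor of $-\theta$ in $\gamma_n$ accounts for at least $q^m$ in the exponent of $Z$, so $|S| \leq n/q^m$ and therefore $|\gamma_n| \leq q^{n/q^m}$. Combined with $|Z| = q^{-q^{d-m}}$, this yields
$$|\gamma_n Z^n| \leq q^{n(1/q^m - q^{d-m})},$$
whose exponent is strictly negative for all $d \geq m$ (using $m \geq 1$), establishing convergence of the series in $\TT_s(K_\infty)$. By comparing finite truncations of the products, the sum equals $\Gamma_d$.

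With the expansion in hand, I would define
$$\Gamma_{s,r}(Y) := \sum_{n=0}^{\mu-r} \gamma_n Y^{\mu-r-n} \in A[\underline{t}_s][Y],$$
which is monic of degree $\mu - r$ in $Y$ since $\gamma_0 = 1$. Then, by construction,
$$w_d := Y^{\mu-r}\Gamma_d - \Gamma_{s,r}(\theta^{q^{d-m}}) = \sum_{n > \mu - r} \gamma_n Y^{\mu-r-n}.$$
Reusing the above bound, the function $n \mapsto n/q^m + (\mu-r-n) q^{d-m}$ is strictly decreasing in $n$ for $d \geq m$, so the supremum is attained at $n = \mu-r+1$, giving
$$|w_d| \leq q^{(\mu-r+1)/q^m - q^{d-m}} \xrightarrow{d\to\infty} 0.$$
This proves the claim. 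The only real obstacle is the norm bookkeeping that controls the tail of the expansion: one has to verify that the contribution of the powers of $\theta$ hidden in $\gamma_n$ never outpaces the decay coming from $|Y|^{\mu-r-n}$, which follows from the crude but sufficient combinatorial estimate $|S| \leq n/q^m$.
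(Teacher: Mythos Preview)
Your proof is correct and follows essentially the same route as the paper: both expand $\Gamma_d$ as a power series in $y_{d-m}=\theta^{-q^{d-m}}$ (your $Z=Y^{-1}$), observe that the coefficients are independent of $d$, truncate to define $\Gamma_{s,r}$, and bound the tail. The only difference is presentational: the paper wraps the argument in an abstract setup about series $h=gf^{-1}\in A[\underline{t}_s][[Y]]^\times$ with coefficients of logarithmic growth in $\theta$, whereas you compute the coefficients $\gamma_n$ explicitly and give the sharper concrete bound $\deg_\theta\gamma_n\le n/q^m$ coming from $|S|\le n/q^m$.
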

\begin{proof}
We first need to focus on some general, elementary facts about formal series.
Let $f=1+\sum_{i\geq 1}f_iY^i$ be a formal series of $A[\underline{t}_s][[Y]]^\times$,
with coefficients $f_i\in\FF_q[\underline{t}_s]\subset A[\underline{t}_s]$. Then, both $f$ and its inverse $f^{-1}$
can be evaluated at any $Y=y\in\TT_s$ with $\|y\|<1$, where we recall that 
$\|\cdot\|$ denotes the Gauss norm of the Tate algebra $\TT_s$. We further note
that $f,f^{-1}$ never vanish in the disk of the $y$'s such that $\|y\|<1$.
Also, if $g=1+\sum_{i\geq 1}g_iY^i$ is a formal series of $A[\underline{t}_s][[Y]]^\times$
with coefficients $g_i\in A\subset A[\underline{t}_s]$ such that $\deg_\theta(g_i)=o(i^{\epsilon})$ for all $\epsilon>0$
(typically, a logarithmic growth), then, the evaluation of $g$ at any $Y=y\in\TT_s$
such that $\|y\|<1$ is allowed and yields a convergent series. Hence, the evaluation 
of the series $h=gf^{-1}\in A[\underline{t}_s][[Y]]^\times$ at $Y=y$ as above
is possible.

Additionally, if now $(y_d)_{d\geq d_0}$ is a sequence of elements of $\TT_s$ with $\|y_d\|<1$
such that $\|y_d\|\rightarrow0$ as $d\rightarrow\infty$, then, in virtue of the 
ultrametric inequality, for any integer $k$,
there exists a polynomial $H_k\in A[\underline{t}_s][Y]$ of degree $\leq k$ in $Y$, such
that
\begin{equation}\label{hd}
y_d^{-r}h(y_d)=H_r(y_d^{-1})+w_d,\end{equation} 
where $(w_d)_{d\geq d_0}$ is a sequence of elements of $\TT_s$ such that 
$\lim_{d\rightarrow\infty}w_d=0$.

We set, for all $d\geq0$, $y_d=\frac{1}{\theta^{q^d}}$.
We observe that, for all $d\geq 1$, 
$$\prod_{i\geq d}\left(1-\frac{\theta}{\theta^{q^i}}\right)=\sideset{}{'}\sum_{n\geq 0}(-\theta)^{\ell_q(n)}y_d^n,$$
where the sum is restricted to the integers $n$ which have, in their expansion in base $q$, only $0,1$ as 
digits. The growth of the degrees in $\theta$ of the coefficients of the formal series in $y_d$ for any fixed $d$
is then logarithmic.

Also, we have, for any integer $m\geq 0$ and $d\geq m$, 
$$\prod_{i\geq d-m}\left(1-\frac{t_j}{\theta^{q^i}}\right)=\sideset{}{'}\sum_{n\geq 0}(-t_j)^{\ell_q(n)}y_{d-m}^n.$$
In the preamble of the proof, we can thus set:
$$f:=\prod_{j=1}^s\sideset{}{'}\sum_{n_j\geq 0}(-t_j)^{\ell_q(n_j)}Y^{n_j},\quad g=\sideset{}{'}\sum_{n\geq 0}(-\theta)^{\ell_q(n)}Y^{q^mn}\in A[\underline{t}_s][[Y]]^\times$$
and we have, setting $h:=gf^{-1}\in A[\underline{t}_s][[Y]]^\times$, that
$h(y_{d-m})=\Gamma_d.$ The identity (\ref{gammar}) of the lemma now follows by using (\ref{hd})
with $k=\mu-r$ and the polynomial $\Gamma_{s,r}$ is easily seen to be monic of the claimed degree.
\end{proof}
\begin{Remark}{\em 
For example, $\Gamma_{s,1}=Y+t_1+\cdots+t_s$.}\end{Remark}

We can now prove Theorem \ref{theoremhighercoefficients}. We suppose that $d\geq m$ and that $m\geq 1$. We recall from
Lemma \ref{lemmaident} that 
\begin{eqnarray*}
\frac{1}{\Pi_{s,d}}&=&-(-\theta)^{\delta_{s,d}-m+1}\frac{\omega_{d-m}(t_1)\cdots\omega_{d-m}(t_s)}{\widetilde{\pi}_d}\\
&=&-(-\theta)^{m-1+\mu q^{d-m}-m+1}\frac{\omega_{d-m}(t_1)\cdots\omega_{d-m}(t_s)}{\widetilde{\pi}_d}\\
&=&-\theta^{\mu q^{d-m}}\frac{\omega_{d-m}(t_1)\cdots\omega_{d-m}(t_s)}{\widetilde{\pi}_d},
\end{eqnarray*}
because $\mu$ is even. 
After explicit expansion of (\ref{explicitidentity}), we write
$$\Pi_{s,d}^{-1}F_d(1;s)-\sum_{i=r+1}^\mu\mathbb{D}_i\theta^{iq^{d-m}}=
\mathbb{D}_r\theta^{rq^{d-m}}+\sum_{j=0}^{r-1}\mathbb{D}_j\theta^{jq^{d-m}}.$$
Dividing both sides by $\theta^{rq^{d-m}}$ we deduce that 
$$\mathbb{D}_r=-\theta^{(\mu-r)q^{d-m}}\frac{\omega_{d-m}(t_1)\cdots\omega_{d-m}(t_s)}{\widetilde{\pi}_d}F_d(1;s)-\sum_{i=1}^{\mu-r}\mathbb{D}_{i+r}\theta^{iq^{d-m}}+u_d,$$
with $u_d$ a sequence of elements of $\mathbb{T}_s(K_\infty)$ tending to zero.
We can rewrite this as
\begin{eqnarray*}
\mathbb{D}_r&=&-\theta^{(\mu-r)q^{d-m}}\frac{\omega(t_1)\cdots\omega(t_s)}{\widetilde{\pi}}\Gamma_dF_d(1;s)-\sum_{i=1}^{\mu-r}\mathbb{D}_{i+r}\theta^{iq^{d-m}}+u_d,\\
&=&-\Gamma_{s,r}(\theta^{q^{d-m}})\frac{\omega(t_1)\cdots\omega(t_s)}{\widetilde{\pi}}F_d(1;s)
-\sum_{i=1}^{\mu-r}\mathbb{D}_{i+r}\theta^{iq^{d-m}}+v_d,
\end{eqnarray*}
for another sequence of elements $v_d$ of $\TT_s(K_\infty)$ tending to zero 
as $d\rightarrow\infty$, by using (\ref{gammar}) of Lemma \ref{gammasr}.
The theorem follows.\CVD

\subsection{Proof of Theorem \ref{proppowersums}}\label{proofproppowersums}

For $a\in A^+(i)$ with $i\geq 0$ and for $j$ an index between $1$ and $s'$, we can write
$a(t_j)=t_j^i+b(t_j)$ where $b\in \FF_q[t]$ is a polynomial with degree in $t$ strictly smaller than $i$
(depending on $a$).
Hence, we can write, with $G_{s,d}$ a polynomial of $K[\undt{s}]$ such that for all $j=1,\ldots,s'$, 
$\deg_{t_j}(G_{s,d})<d-1$:
\begin{eqnarray*}
F_{s,d}&=&\sum_{i=0}^{d-1}\sum_{a\in A^+(i)}\frac{a(t_1)\cdots a(t_s)}{a}\\
&=&\sum_{a\in A^+(d-1)}\frac{a(t_1)\cdots a(t_{s'})t_{s'+1}^{d-1}\cdots t_s^{d-1}}{a}+G_{s,d}\\
&=&t_{s'+1}^{d-1}\cdots t_s^{d-1}S_{d-1}(1;s')+G_{s,d}.
\end{eqnarray*}
In particular, $S_{d-1}(1,s')\in K[\undt{s'}]$ is the coefficient of $t_{s'+1}^{d-1}\cdots t_s^{d-1}$ of the polynomial 
$F_{s,d}\in K[\undt{s}]$. Now, by Theorem \ref{Theorem2} we see that $S_{d-1}(1,s')$ is the coefficient
of $t_{s'+1}^{d-1}\cdots t_s^{d-1}$ in the polynomial $\Pi_{s,d}\mathbb{H}_s|_{Y=\theta^{q^{d-m}}}$
(for all $d\geq m$, with $s=m(q-1)+1$). Since the coefficient of $t_{s'+1}^{d-m}\cdots t_s^{d-m}$ in 
$b_{d-m}(t_1)\cdots b_{d-m}(t_s)$ is equal to $b_{d-m}(t_1)\cdots b_{d-m}(t_{s'})$,
we deduce that $S_{d-1}(1,s')$ is equal to $\Pi_{s',s,d}=\Pi_{s,d}/\prod_{i = s'+1}^s b_{d-m}(t_i)$ times the coefficient of 
$t_{s'+1}^{m-1}\cdots t_s^{m-1}$ of $\mathbb{H}_s|_{Y=\theta^{q^{d-m}}}$ which is 
$\mathbb{H}_{s,s'}|_{Y=\theta^{q^{d-m}}}$. \hfill \CVD

\subsection{Examples}\label{examplessmalls}
We give some formulas without proof.
If $s=q$, then $m=1$, and it can be proved that $\mathbb{H}_q=-1$ and $\lambda_{1,s}=1$.
If $s=2q-1$, we have that $m=2$, $\mu=q-1$ and the following formula holds, when $q>2$:
\begin{equation}\label{rudyformula}
\mathbb{H}_{2q-1}=\prod_{i=1}^{2q-1}(t_i-Y)+(Y^q-\theta)e_{q-1}(t_1-Y,\ldots,t_s-Y)\in A[\underline{t}_s],\end{equation}
where the polynomials $e_i$ are the elementary symmetric polynomials that, in the variables 
$T_1,\ldots,T_s$, is
defined by:
$$\prod_{i=1}^s(X-T_i)=X^s+\sum_{j=1}^s(-1)^jX^{s-j}e_j(T_1,\ldots,T_s).$$
Developing (\ref{rudyformula}) we obtain, again in the case $q>2$:
$$\mathbb{H}_{2q-1}=\sum_{i=0}^{q-1}(-1)^i(e_{2q-1-i}-\theta e_{q-1-i})Y^i$$
with $e_j=e_j(\undt{s})$, from which it is easy to see that $\mathbb{H}_{2q-1}$ has the following partial degrees: $\deg_{t_i}(\mathbb{H}_{2q-1})=1$ for all $i$, $\deg_\theta(\mathbb{H}_{2q-1})=1$ and $\deg_Y(\mathbb{H}_{2q-1})=\mu=q-1$ in agreement with the 
Theorem \ref{Theorem2}. Furthermore,
the coefficient of $Y^\mu$ is equal to
$-\mathbb{B}_{2q-1}=-\lambda_{1,2q-1}=-\theta+e_q(t_1,\ldots,t_{2q-1})$ (see the examples in \cite{APTR}). 

Also, computing the coefficient of the appropriate monomial in $t_1^{d-1},\ldots,t_s^{d-1}$ as in \S \ref{proofproppowersums} from the formula (\ref{rudyformula}), it is easy to deduce the formulas
$$S_{d-1}(1;s)=\frac{b_{d-1}(t_1)\cdots b_{d-1}(t_s)}{l_{d-1}}$$ for $d\geq 1$ and $s=0,\ldots,q-1$.
Further, we compute easily, for $d\geq 2$:
$$S_{d-1}(1,q) = \frac{b_{d-2}(t_1)\cdots b_{d-2}(t_q)}{l_{d-1}}\left(\prod_{i = 1}^q(t_i - \theta^{q^{d-2}}) + \theta^{q^{d-1}} - \theta \right),$$ which agrees with \cite[Corollary 4.1.8]{PER0}.
The analysis of the case $q=2$ 
is similar but we will not describe it here.
For $s=3q-2$ we have $m=3$ and $\mu=q^2+q$. We refrain from displaying here the explicit formulas we obtain.

\section{Properties of finite zeta values}\label{iota}

We first give the exact definition of the embedding $\iota$ in the statement of Theorem \ref{Theorem1}.
The ring $\AAA_s$ of the introduction is easily seen to be an $\boldsymbol{F}_s$-algebra (use the diagonal embedding
of $\boldsymbol{F}_s$ in $\AAA_s$).
The map $x\mapsto x^q$ induces an $\boldsymbol{F}_s$-linear automorphism of 
$\boldsymbol{F}_s[\theta]/P\boldsymbol{F}_s[\theta]$, hence, component-wise,
an $\boldsymbol{F}_s$-linear automorphism of $\AAA_s$ that we denote again by $\tau$.
There is a natural injective ring homomorphism
$$K\otimes_{\FF_q}\boldsymbol{F}_s\xrightarrow{\iota}\AAA_s$$
uniquely defined by sending $r\in K$ to the sequence of its reductions mod $P$, well defined for almost all primes $P$. 
Now, $\iota$ extends to $K^{1/p^\infty}\otimes_{\FF_q}\boldsymbol{F}_s$ in an unique
way by setting, for $r\in K^{1/p^\infty}\otimes_{\FF_q}\boldsymbol{F}_s$, 
$$\iota(r)=\tau^{-w}(\iota(\tau^w(r))),\quad w\gg0.$$

\begin{Lemma}\label{casesnot0}
Assuming that $n,s>0$ and that $n\not\equiv s\pmod{q-1}$, we have that $Z_\AAA(n;s)\in\AAA_s^\times$.
\end{Lemma}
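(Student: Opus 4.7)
My plan is to specialize $F_d(n;s)$ at a carefully chosen $A$-rational point, which converts the twisted sum into an ordinary power sum in $\theta$, and then to show this specialization is a non-zero polynomial in $\theta$ with degree bounded independently of $d$. Since every $a \in A=\FF_q[\theta]$ satisfies $a(\theta^{q^k}) = a^{q^k}$, setting $(t_1,\ldots,t_s) = (\theta^{q^{k_1}},\ldots,\theta^{q^{k_s}})$ gives
$$Q_d(\theta) := F_d(n;s)\big|_{t_i = \theta^{q^{k_i}}} = \sum_{i=0}^{d-1}\sum_{a \in A^+(i)} a^M, \qquad M := q^{k_1}+\cdots+q^{k_s} - n.$$
I choose the $k_i$ to ensure $M \geq 0$: take all $k_i = 0$ when $n \leq s$; otherwise take $k_1 = \lceil \log_q n \rceil$ and $k_i = 0$ for $i \geq 2$. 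Since $q^{k_i} \equiv 1 \pmod{q-1}$, we have $M \equiv s-n \pmod{q-1}$, and by hypothesis this residue is non-zero, so $M > 0$ and $(q-1)\nmid M$.

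The key technical step is to verify that $Q_d \in A$ is non-zero with $\deg_\theta Q_d \leq D$ for a bound $D$ independent of $d$. Writing a generic $a \in A^+(i)$ with $i\geq 1$ as $\theta^i + c_{i-1}\theta^{i-1}+\cdots+c_0$, expanding $a^M$ multinomially, and summing over $(c_0,\ldots,c_{i-1}) \in \FF_q^i$, one invokes the standard identity
$$\sum_{c\in\FF_q}c^{e} = \begin{cases}-1 & \text{if } e > 0 \text{ and } (q-1) \mid e,\\ 0 & \text{otherwise.}\end{cases}$$
This yields two consequences. First, $\sum_{a \in A^+(i)}a^M = 0$ for all $i > M/(q-1)$, because every surviving term would force each of the $i$ variables $c_0,\ldots,c_{i-1}$ to appear with an exponent that is a positive multiple of $q-1$, which is impossible when $i(q-1) > M$; hence $\deg_\theta Q_d \leq D := M\lfloor M/(q-1)\rfloor$, uniformly in $d$. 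Second, the constant term of $\sum_{a \in A^+(i)}a^M$ for $i \geq 1$ is $q^{i-1}\sum_{c\in\FF_q} c^M = 0$ (using $(q-1)\nmid M$ and $M>0$), so only the $i=0$ summand contributes to $Q_d(0)$, giving $Q_d(0) = 1$ and in particular $Q_d \neq 0$.

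The lemma follows: for every prime $P$ of $A$ with $\deg P > D$, the non-zero polynomial $Q_d \in A$ of degree $\leq D$ cannot be divisible by $P$, so $Q_d \bmod P \neq 0$ in $A/P$; since $Q_d \bmod P$ is the evaluation of $F_d(n;s)\bmod P \in (A/P)[\undt{s}]$ at $(\zeta^{q^{k_1}},\ldots,\zeta^{q^{k_s}})$ for any root $\zeta$ of $P$, we conclude $F_d(n;s)\not\equiv 0 \pmod{P}$. The remaining primes have $\deg P \leq D$ and are finite in number; among those of each degree $d\leq D$, at most $D/d$ can divide $Q_d$, so the full exceptional set of $P$ is finite and $Z_\AAA(n;s) \in \AAA_s^\times$. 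The main obstacle is the combined multinomial and $\FF_q$-sum bookkeeping that establishes the degree bound and non-vanishing of $Q_d$; once these are in hand, everything else is formal.
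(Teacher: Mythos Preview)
Your proof is correct and follows essentially the same strategy as the paper: specialize the variables $t_i$ to $q$-power Frobenius twists of $\theta$, so that $F_d(n;s)$ becomes a Bernoulli--Goss polynomial $\operatorname{BG}(M;0)\in A$ which is non-zero and hence has only finitely many prime divisors. The only difference is cosmetic---you verify the non-vanishing and the stabilization directly via the constant-term computation $Q_d(0)=1$ and the elementary bound on power sums, whereas the paper cites \cite[Remark~8.13.8]{GOS} for the non-vanishing of $\operatorname{BG}(N;0)$.
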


\begin{proof} Let $P$ be a prime of $A$ of degree $d$,
let $v_P:K[\underline{t}_s]\rightarrow\ZZ\cup\{\infty\}$ be the $P$-adic valuation
normalized by $v_P(P)=1$. If $f\in K[\underline{t}_s]\setminus\{0\}$, then 
$v_P(f(\theta^{q^{k_1}},\ldots,\theta^{q^{k_s}}))\geq v_P(f)$, for any choice of $k_1,\ldots,k_s\in \ZZ$
non-negative integers. Let $k$ be a positive integer such that $N=sq^k-n>0$, and note that
since $n\not\equiv s\pmod{q-1}$, we have that $N\not\equiv0\pmod{q-1}$. Let $\operatorname{ev}:K[\underline{t}_s]\rightarrow K$ be the ring homomorphism defined by the substitution $t_i\mapsto\theta^{q^k}$
for all $i=1,\ldots,s$. On one hand, we have that, if $d$ is big enough,
$$\operatorname{ev}(F_d(n;s))=\operatorname{BG}(N;0):=\sum_{i\geq 0}\sum_{a\in A^+(i)}a^N\in A\setminus\{0\},$$
by \cite[Remark 8.13.8 1]{GOS} (because $N>0$ and $N\not\equiv0\pmod{q-1}$).
On the other hand, $$v_P(\operatorname{ev}(F_d(n;s)))\geq v_P(F_d(n;s)).$$ Assuming by contradiction
that $Z_\AAA(n;s)\not\in\AAA_s^\times$, there would exist an infinite sequence of primes $P$
such that $v_P(F_d(n;s))>0$. But then, the non-zero polynomial $\operatorname{BG}(N;0)$
would have infinitely many distinct divisors in $A$, hence yielding a contradiction.
\end{proof}

\subsection{Proof of Theorem \ref{Theorem1}}\label{proofoftheorem1} 

Let $P$ be a prime of $A$. We observe that, for $d=\deg_\theta(P),$ $l_{d-1}=(\theta-\theta^q)\cdots(\theta-\theta^{q^{d-1}})\equiv \left.\frac{P(t)}{t-\theta}\right|_{t=\theta}\equiv P'(\theta)\pmod{P}$. Also, $b_d(t)=(t-\theta)\cdots(t-\theta^{q^{d-1}})\equiv P(t)\pmod{P}$. Therefore:
\begin{eqnarray*}
\Pi_{s,d}&\equiv&\frac{P(t_1)\cdots P(t_s)}{P'(\theta)\prod_{i=1}^s\prod_{j=1}^{m}(t_i-\theta^{q^{d-j}})}\\
&\equiv&\frac{P(t_1)\cdots P(t_s)}{P'(\theta)\prod_{i=1}^s\prod_{j=1}^{m}(t_i-\theta^{q^{-j}})}\pmod{P}.
\end{eqnarray*}
Also, we have that $\mathbb{H}_s|_{Y=\theta^{q^{d-m}}}\equiv\mathbb{H}_s|_{Y=\theta^{q^{-m}}}\pmod{P}$.

We set
$$\mu_{1,s}:=-\frac{\mathbb{H}_s|_{Y=\theta^{q^{-m}}}}{\prod_{i=1}^s\prod_{j=1}^{m}(t_i-\theta^{q^{-j}})}\in K^{1/p^\infty}\otimes_{\FF_q}\boldsymbol{F}_s,$$
and this gives $Z_\AAA(1;s)=\widehat{\Pi}_{1,s}\iota(\mu_{1,s})$.

Now we tackle the non-vanishing of $\iota(\mu_{1,s})$. Let $d\geq 0$ be an integer, define the polynomial $\Psi_d(X) :=\frac{X^{q^d}-X}{X^q-X}$,
of degree $q^d-q$. Then, modulo $\Psi_d(X)$, the powers $1,X,X^2,\ldots,X^{q^d-q-1}\in\FF_q[X]$
are linearly independent over $\FF_q$.
We set now, for  $m\geq 1$ and for $d$ big enough, $w :=\mu q^{d-m}+m-1$, and we observe that,
again for $d$ big enough, $0\leq w<q^d-q$. Hence, the images of $1,\theta,\ldots,\theta^w$
in the ring $\FF_q[\underline{t}_s,\theta]/(\Psi_d)$ are $\FF_q$-linearly independent (where $\Psi_d=\Psi_d(\theta)$)
and a polynomial $H\in\FF_q[\underline{t}_s,\theta]$ of degree $\leq w$ in $\theta$ is zero modulo $\Psi_d$
if and only if it is identically zero.
Now, one easily shows using the degree in $Y$ of $\HH_s$ that for all $d$ sufficiently large, $-H_{s,d}=-\HH_s(\theta^{q^{d-m}})$ is a monic polynomial in $\theta$ of degree $w$.
In particular, the image of $-H_{s,d}$ in $\FF_q[\underline{t}_s,\theta]/(\Psi_d)$
is non-zero.

We now end the proof of the non-vanishing of $\mu_{1,s}$, equivalent to the non-vanishing of $\iota(\mu_{1,s})$, via a proof by contradiction. Let us suppose that for all $d$ big enough, and for any $P$ prime of $A$ of degree $d$,
we have $H_{s,d} \equiv 0 \pmod{P}$. In particular, this occurs for all large enough prime numbers $d =\varpi$. Now, $$\Psi_{\varpi}=\prod_{P;\deg_\theta(P)=\varpi}P,$$ and the reduction of $H_{s,\varpi}$ modulo $\Psi_\varpi$ is zero, giving the contradiction. Observe also that $\iota(\mu_{1,s})\in\AAA_s^\times$.
\CVD

\subsection{Proof of Theorem \ref{proposition1}}

We recall \cite[Lemma 4]{ANG&PEL} that the sum
$$S_{j,s}=\sum_{a\in A^+(j)}a(t_1)\cdots a(t_s)\in \FF_q[\undt{s}]$$
vanishes if and only if $j(q-1)>s$. In particular, for $N\geq 0$ and for all 
$j$ big enough, 
$S_j(-N;s)=\sum_{a\in A^+(j)}a^N a(t_1)\cdots a(t_s)=0$,
and the sum
$$\operatorname{BG}(N;s)=\sum_{j\geq 0}\sum_{a\in A^+(j)}a^N a(t_1)\cdots a(t_s)$$
represents a polynomial of $A[\undt{s}]$. 

First we motivate the proof. Noticing that
$$S_j(n;s)=\sum_{a\in A^+(j)}a^{-n}a(t_1)\cdots a(t_s)\equiv \sum_{a\in A^+(j)}a^{q^{\deg P}-1-n}a(t_1)\cdots a(t_s)\pmod{P}$$
for all primes $P$ such that $q^{\deg P}>n$, our goal is to prove that 
\[F_{\deg P}(n;s) \equiv \operatorname{BG}(q^{\deg P}-1-n;s) \pmod{P},\] for all irreducibles $P$ of large enough degree, under the assumption that $\ell_q(n) > s$. This amounts to showing that, if $\ell_q(n) > s$, then, for all $j \geq d$, we have 
\[\sum_{a\in A^+(j)}a^{q^{d}-1-n}a(t_1)\cdots a(t_s) = 0,\] and this follows immediately from \cite[Theorem 3.1]{PER2}. We give another proof here via a specialization argument employing \cite[Lemma 4]{ANG&PEL}. 

Fix integers $n \geq 1$ and $s \geq 0$, and assume $\ell_q(n) > s$. Let $d'$ be any integer such that $q^{d'} > n$. Set
$s':=\ell_q(q^{d'}-1-n)+s$. 
Develop $q^{d'}-1-n=\sum_{i=0}^{d'-1}c_iq^i$ in base $q$, so that the coefficients 
$c_i$ are in $\{0,\ldots,q-1\}$.
Letting $K[\undt{s'}]\xrightarrow{\operatorname{ev}} K[\undt{s}]$ be the 
ring homomorphism  obtained by sending the vector of 
variables $\undt{s'}=(t_1,\ldots,t_{s'})$ to the vector of values
$$(\underbrace{\theta,\ldots,\theta}_{c_0\text{ times}},\underbrace{\theta^q,\ldots,\theta^q}_{c_1\text{ times}},\ldots,\underbrace{\theta^{q^{d'-1}},\ldots,\theta^{q^{d'-1}}}_{c_{d'-1}\text{ times}},t_1,\ldots,t_s),$$
we have 
$$\operatorname{ev}(S_{j,s'}) = \sum_{a\in A^+(j)}a^{q^{d'}-1-n}a(t_1)\cdots a(t_s) \equiv S_j(n;s) \pmod{P},$$
for all primes $P$ of degree $d'$ with $q^{d'} > n$.
In particular, by Simon's Lemma, if $j(q-1)>s'$, then $S_{j,s'} = 0$ and, thus, $S_j(n;s)\equiv0\pmod{P}$, for all primes $P$ such that $q^{\deg P}>n$. Now, we notice that 
\[s' := \ell_q(q^{d'}-1-n)+s=\ell_q(q^{d'}-1)+s-\ell_q(n) = d'(q-1) + s-\ell_q(n),\] where the first equality holds as there is no base $q$ carry over in the sum $(q^{d'}-1-n) +n$. Hence, if we suppose now that
$d'=\deg P$ and $\ell_q(n)>s$, we find that $s' < j(q-1)$ for all $j \geq d'$, and hence
for all primes $P$ such that $q^{\deg P} > n$, we have
$$F_{\deg_\theta(P)-1}(n;s)\equiv\operatorname{BG}(q^d-1-n;s)\pmod{P}.$$

The vanishing result follows by observing that $\operatorname{BG}(q^d-1-n;s)=0$ if $q^d>n$ and
$n\equiv s\pmod{q-1}$, by \cite[Theorem 4.2]{PER2}. \CVD

\section{Further properties of finite zeta values}

\subsection{Period properties}\label{periodproperties}

We study a few additional properties of the elements of $\AAA_1$
$$\widehat{\pi}=\left(-\frac{1}{P'}\right)_P,\quad \widehat{\omega}=\left(\frac{1}{P(t)}\right)_P$$
that we present here as some kind of finite analogues of the elements $\widetilde{\pi}$
and $\omega(t)$.

The transcendence over $K$ of $\widetilde{\pi}$ and the transcendence of $\omega$
over $K(t)$ can be proved in a variety of ways (see for example the techniques of Papanikolas' \cite{PAP}). From this, we immediately deduce 
that $\widetilde{\pi}$ and $\omega$ are algebraically independent over the field 
$K(t)$. Now, we say that two elements $x,y\in \AAA_1$ are {\em algebraically independent}
if the only polynomial $Q\in K^{1/p^\infty}\otimes_{\FF_q}\boldsymbol{F}_s[X,Y]$ such that 
$Q(x,y)=0$ (for the $K^{1/p^\infty}\otimes_{\FF_q}\boldsymbol{F}_s$-algebra structure of $\AAA_1$) is the zero polynomial.
Similarly, we have a notion of element $x\in\AAA_1$ which is {\em transcendental} over $K^{1/p^\infty}\otimes_{\FF_q}\boldsymbol{F}_s$
(\footnote{These are not well behaved properties. Following a remark of M. Kaneko, it is possible
to construct, in $\AAA_1$, a non-zero element $x$ which is a root of a non-zero polynomial in 
$K\otimes_{\FF_q}\boldsymbol{F}_s[X]$ (hence, it is not ``transcendental") but also such that $x$ is not a root of any irreducible polynomial of
$K\otimes_{\FF_q}\boldsymbol{F}_s[X]$.}). We presently do not know if $\widehat{\pi}$ and $\widehat{\omega}$ are algebraically independent. Nevertheless, we can prove that $\widehat{\pi}$
is irrational (in other words, in $\AAA_0$, $\widehat{\pi}\not\in\iota(K)$) and that $\widehat{\omega}\in\AAA_1$
is transcendental.
\begin{Theorem}\label{irrational}
$\widehat{\pi}$ is irrational.
\end{Theorem}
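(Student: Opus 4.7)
The plan is to proceed by contradiction. Suppose $\widehat\pi = \iota(a/b)$ with $a,b \in A$ coprime, $b \neq 0$, and $a \neq 0$ (since no component $-1/P'(\theta)\bmod P$ vanishes). Unpacking the definition of $\iota$, this hypothesis is equivalent to the congruence
\[ a(\theta) P'(\theta) + b(\theta) \equiv 0 \pmod{P} \]
holding for all but finitely many monic irreducibles $P$. The first ingredient I will use is the universal identity
\[ \prod_{i=1}^{d-1}\bigl(\theta - \theta^{q^i}\bigr) \equiv P'(\theta) \pmod{P}, \]
valid for every monic irreducible $P$ of degree $d$: both sides evaluate to the same thing at each root $\alpha^{q^k}$ of $P$, since $\prod_{i=1}^{d-1}(\alpha^{q^k} - \alpha^{q^{k+i}}) = \prod_{j \neq k}(\alpha^{q^k} - \alpha^{q^j}) = P'(\alpha^{q^k})$ using that $\{\alpha^{q^j}\}_{j=0}^{d-1}$ are the roots of $P$.

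Combining the two displayed congruences, the single polynomial
\[ F_d(\theta) := a(\theta)\prod_{i=1}^{d-1}\bigl(\theta - \theta^{q^i}\bigr) + b(\theta) \in A \]
is divisible by $P$ for almost every monic irreducible $P$ of degree $d$. Taking $d$ prime, this forces $\Phi_d := (\theta^{q^d} - \theta)/(\theta^q - \theta)$, the product of all monic irreducibles of degree exactly $d$ (of degree $q^d - q$), to divide $F_d$ in $A$. Now $\deg F_d = \deg a + (q^d - q)/(q-1)$. For $q \geq 3$ and $d$ prime sufficiently large one has $\deg F_d < q^d - q = \deg \Phi_d$, so $\Phi_d \mid F_d$ forces $F_d \equiv 0$; but then $a\prod_{i=1}^{d-1}(\theta - \theta^{q^i}) = -b$ has LHS of unbounded degree and RHS of fixed degree, contradiction.

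The main obstacle is the case $q = 2$, where the degree bound becomes tight: $\deg F_d = \deg \Phi_d + \deg a$, so divisibility only yields $F_d = \Phi_d G_d$ for some $G_d \in A$ of degree $\leq \deg a$. Since the set of such $G_d$ is finite, a pigeonhole argument gives a single $G_0$ realized for infinitely many primes $d$, for which
\[ a(\theta)\prod_{i=1}^{d-1}(\theta - \theta^{2^i}) + b(\theta) = \Phi_d(\theta)\, G_0(\theta). \]
Here I will extract finer information by reducing modulo $\theta^N$: the product $\prod_{i=1}^{d-1}(\theta - \theta^{2^i})$ has $\theta$-adic valuation $d-1$, so for $d > N$ the left side reduces to $b \bmod \theta^N$, while $\Phi_d \bmod \theta^N = 1 + \theta + \cdots + \theta^{N-1}$. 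Letting $N \to \infty$, the formal power series identity $b = G_0/(1+\theta)$ emerges, so $G_0 = (1+\theta)b$. Substituting back and using the telescoping identity $(1+\theta)\Phi_d = 1 + \theta^{2^d - 1}$ in characteristic $2$, the equation collapses to
\[ a(\theta) \prod_{i=1}^{d-1}(\theta - \theta^{2^i}) = b(\theta)\cdot \theta^{2^d - 1}. \]
Comparing $\theta$-adic valuations on both sides gives $v_\theta(a) - v_\theta(b) = 2^d - d$, a quantity that strictly depends on $d$; this cannot hold for infinitely many $d$, which is the final contradiction.
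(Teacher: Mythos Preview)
Your proof is correct and takes a genuinely different route from the paper's.

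The paper argues as follows: assuming $-1/\widehat{\pi}=\iota(a/b)$, it invokes Hayes' equidistribution theorem for irreducibles in $A$ to produce, for each $n\geq 1$, infinitely many primes of the shape $P=\theta^{p\delta}+\theta^{p(\delta-n)+1}+\cdots$. For such $P$ one has $\deg P'=p(\delta-n)$, so with $n=\deg b$ the remainder $bP'$ has degree strictly less than $\deg P$; uniqueness of Euclidean division then forces $a=bP'$ for $\delta$ large, and a degree comparison gives the contradiction. This is uniform in $q$ but leans on an external analytic result.

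Your argument instead exploits the universal congruence $l_{d-1}\equiv P'\pmod P$ (which the paper uses elsewhere) to replace the prime-by-prime condition $aP'+b\equiv 0\pmod P$ by divisibility of the \emph{single} polynomial $F_d=al_{d-1}+b$ by the product $\Phi_d$ of all degree-$d$ irreducibles (for $d$ prime and large). The degree count $\deg F_d=\deg a+\frac{q^d-q}{q-1}$ versus $\deg\Phi_d=q^d-q$ kills the case $q\geq 3$ immediately. What you gain is that the argument is entirely elementary, with no appeal to Hayes. The price is the separate treatment of $q=2$, where the degree bound is tight; your pigeonhole on the finitely many possible quotients $G_d\in\FF_2[\theta]_{\leq\deg a}$, followed by the $\theta$-adic identification $G_0=(1+\theta)b$ and the resulting valuation identity $v_\theta(a)-v_\theta(b)=2^d-d$, is a neat way to close this case.

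One small point of phrasing: when you say $F_d$ is divisible by ``almost every'' prime of degree $d$ and then conclude $\Phi_d\mid F_d$, you are implicitly using that the finitely many exceptional primes all have bounded degree, so for $d$ large \emph{every} prime of degree $d$ divides $F_d$. This is correct but worth making explicit.
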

\begin{proof}
Fix $d \geq 1$. For $1\leq k\leq d$, choose $(\alpha_1,\ldots,\alpha_k)\in\FF_q^k$ and relatively prime polynomials $f,g\in A$.
Denote by $\sharp(d)$ the cardinality of the set of primes $P\in A$ of degree $d\geq k$ such that
$P\equiv g\pmod{f}$ and $$\deg_\theta(P-\theta^d-\alpha_1\theta^{d-1}-\cdots-\alpha_k\theta^{d-k})<d-k.$$ 
We now invoke the following result of Hayes \cite{HAY}, strengthening Artin's analogue of the Prime Number Theorem 
for the field $K$. 
\begin{Theorem}[Hayes]\label{hayesTheorem}
We have
$$\sharp(d)=\frac{q^{d-k}}{d\Phi(f)}+O\left(\frac{q^{\vartheta n}}{n}\right),$$
with $0\leq \vartheta<1$, and where $\Phi$ is the function field analogue of Euler's $\varphi$-function 
relative to the ring $A$.{\em(\footnote{For the definition and the basic properties of $\Phi$, see Rosen, \cite[Chapter 1]{ROS}.})}
\end{Theorem}

In particular,
for all $n\geq 1$ fixed, there exist infinitely many primes $P$ of the form 
\begin{equation}\label{typeofprime}
P=\theta^{p\delta}+\theta^{p(\delta-n)+1}+\cdots
\end{equation}
(recall that $p$ is the characteristic of $\FF_q$).
For commodity, we denote by $\mathcal{P}_n$ the (infinite) set of primes $P$ of the form (\ref{typeofprime}).

Let us suppose, by contradiction, that there exists a $\kappa \in K$ such that $\iota(\kappa) = -1/\widehat{\pi}$, and write $\kappa = a/b$, with $a,b \in A$ coprime. Say $\deg b = n$. Then, using euclidean division, for all primes $P\in\mathcal{P}_n$, we may write
$$a=bP'+Q_PP,$$ for some $Q_P\in A$, since, by construction: $\deg(bP') < \deg P$ and $a \equiv bP' \pmod{P}$.
But the degree of the right side of the last displayed equation tends to infinity, while the degree of the left side is constant, giving the desired contraction. 
\end{proof}

\begin{Conjecture}
$\widehat{\pi}$ is transcendental over $K$.
\end{Conjecture}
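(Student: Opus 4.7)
The plan is to try to generalize the degree argument of Theorem~\ref{irrational}. Suppose for contradiction that $Q(\widehat{\pi}) = 0$ in $\AAA_0$ for some nonzero $Q \in K[X]$; after clearing denominators, one may write $Q(X) = \sum_{i=0}^{n} a_i X^i \in A[X]$ with $a_n \neq 0$ and $n \geq 1$. Since every irreducible $P \in A$ is separable over $\FF_q$, one has $\gcd(P,P')=1$, and multiplying $Q(-1/P')\equiv 0\pmod P$ through by $(-P')^n$ reformulates the relation as
\[
\tilde Q(P'(\theta)) \equiv 0 \pmod{P} \qquad \text{for all but finitely many primes } P,
\]
where $\tilde Q(Y) := \sum_{i=0}^{n}(-1)^i a_i Y^{n-i} \in A[Y]$ is nonzero (its constant term being $(-1)^n a_n \neq 0$). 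The aim is then to exhibit an infinite family of primes $P$ of degree $d$ for which $\tilde Q(P'(\theta))$ is a \emph{nonzero} polynomial in $\theta$ of degree strictly less than $d$; such a $P$ cannot divide it, yielding the desired contradiction.

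To produce such primes, one would mimic the Hayes construction used in the proof of Theorem~\ref{irrational}: fix an integer $f$ with $p \nmid f$, and prescribe $P = \theta^{p\delta} + \theta^f + \sum_{i<f} c_i \theta^i$ with $d=p\delta$ divisible by $p$, so that $d\theta^{d-1}$ vanishes in $P'$ and $\deg_\theta P'(\theta) = f-1$. Writing $\tilde Q(Y) = \sum_j b_j(\theta) Y^j$ of degree $m \leq n$ in $Y$, one gets
\[
\deg_\theta \tilde Q(P'(\theta)) \;\leq\; m(f-1) + \max_j \deg_\theta b_j.
\]
The nonvanishing of $\tilde Q(P'(\theta))$ in $A$ would follow from $\tilde Q$ having at most $m$ roots in $A$, each of bounded degree, so that by varying the free coefficients $c_i$ one arranges $P'(\theta)$ to avoid all of them. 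If the displayed upper bound is strictly less than $d$, Theorem~\ref{hayesTheorem} would then supply infinitely many such primes $P$, completing the argument.

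The main obstacle is quantitative. For the degree inequality $m(f-1) + O(1) < d$ to hold as $d \to \infty$, the gap $k := d - f$ must grow at least linearly in $d$, of the order of $(1 - 1/m)\, d$. But the error term in Theorem~\ref{hayesTheorem}, already sharp under the Riemann Hypothesis for function fields (Weil's theorem, giving $\vartheta = 1/2$), forces $k < (1-\vartheta)\,d = d/2$ for the main term to dominate. These two constraints $k \geq (1-1/m)\,d$ and $k < d/2$ are compatible only for $m = 1$, which is precisely the linear case already settled by Theorem~\ref{irrational}; for $m \geq 2$ the direct degree argument breaks down.

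Overcoming this obstruction seems to require a genuinely new input: either a sharper, Chebotarev-type, control on the joint distribution of the pair $(P, P'(\theta) \bmod P)$ as $P$ ranges over primes of fixed degree, so as to directly produce primes for which $P'(\theta)\bmod P$ avoids the at most $m$ roots of $\tilde Q$ modulo $P$ in $A/P$; or the transplantation of transcendence machinery in the spirit of Anderson--Brownawell--Papanikolas for $t$-motives to the residue ring $\AAA_0$, an approach complicated by the poor algebraic behavior of $\AAA_0$ noted in the remarks preceding the conjecture. Either route appears delicate, and this is, to my mind, the reason the statement is posed only as a conjecture.
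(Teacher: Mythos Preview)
Your proposal is not a proof, and you correctly do not present it as one: you explain why the degree argument behind Theorem~\ref{irrational} does not extend to the transcendence statement, and you conclude that this is why the statement is left as a conjecture. This is exactly in line with the paper, which offers no proof either; immediately after stating the conjecture the paper only remarks that ``This does not seem to follow from Hayes' Theorem~\ref{hayesTheorem}.'' So there is nothing to compare on the level of proofs.

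What you have added is a precise quantitative version of the paper's one-line remark. Your reduction to $\tilde Q(P')\equiv 0\pmod P$ is correct, and your identification of the two competing constraints is sharp: prescribing the top $k$ coefficients of $P$ via Hayes forces $k<(1-\vartheta)d$ for the main term to survive, while the degree bound $\deg_\theta \tilde Q(P')<d$ forces $k>(1-1/m)d+O(1)$; with $\vartheta=1/2$ from Weil these are compatible only for $m=1$. This is a clean and accurate explanation of the obstruction the paper alludes to. One minor caveat: your claim that nonvanishing of $\tilde Q(P')$ in $A$ follows from ``$\tilde Q$ having at most $m$ roots in $A$'' is not quite right as stated, since $\tilde Q\in A[Y]$ need not factor over $A$ and may have no roots in $A$ at all while still vanishing at some particular $P'$; but the point is moot since the degree obstruction already blocks the argument for $m\geq 2$ before this step becomes relevant.
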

This does not seem to follow from Hayes' Theorem \ref{hayesTheorem}. 

\begin{Remark}{\em 
We point out another 
perhaps interesting ``negative feature'' related to $\widehat{\pi}$: {\it it is not possible to interpolate the map $\FF_q^{ac} \ni \zeta \mapsto P_\zeta'(\zeta)\in\FF_q^{ac}$ by an element of $\TT$}, where $P_\zeta\in\FF_q[X]$ is the minimal polynomial of $\zeta$ and $\FF_q^{ac}$ denotes the algebraic closure of $\FF_q$ in $\CC_\infty$. In other words, there is no function $f\in\TT$ such that, for all
$\zeta\in\FF_q^{ac}$, $f(\zeta)=P'_\zeta(\zeta)$.
We skip the proof as this is superfluous for our purposes here.}\end{Remark}

\begin{Lemma}\label{transcendental}
$\widehat{\omega}$ is transcendental over $K(t)$.
\end{Lemma}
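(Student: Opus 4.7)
My plan is a proof by contradiction. Suppose $\widehat{\omega}$ is algebraic over $K(t) = \FF_q(\theta,t)$, and let $\sum_{i=0}^n \alpha_i Y^i \in K(t)[Y]$ be a minimal polynomial of $\widehat{\omega}$. Clearing denominators and using the injectivity of $\iota$, I may assume there are $a_0, \ldots, a_n \in A[t] = \FF_q[\theta,t]$, with $a_n \neq 0$ as a polynomial, such that $\sum_i a_i \widehat{\omega}^i = 0$ in $\AAA_1$.

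Reading the identity component by component, the $P$-entry $1/P(t) \in \FF_q(t) \subset \FF_q(t)[\theta]/(P(\theta))$ of $\widehat{\omega}$ yields, for every prime $P$ of $A$,
$$\sum_{i=0}^n a_i(\theta, t) P(t)^{-i} \equiv 0 \pmod{P(\theta)}.$$
Multiplying through by the unit $P(t)^n \in \FF_q(t)^\times$ I obtain
$$F_P(\theta, t) := \sum_{i=0}^n a_i(\theta, t) P(t)^{n-i} \equiv 0 \pmod{P(\theta)}.$$
Now the crucial observation is that $F_P \in \FF_q[\theta,t]$ has degree in $\theta$ bounded by $C := \max_i \deg_\theta(a_i)$, a constant independent of $P$. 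As soon as $\deg_\theta(P) > C$, the congruence forces the honest equality $F_P = 0$ in $\FF_q[\theta, t]$, for all but finitely many primes $P$.

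To conclude, I introduce the auxiliary polynomial
$$Q(X) := \sum_{i=0}^n a_i(\theta, t) X^{n-i} \in K(t)[X].$$
It is non-zero, since its constant term is $a_n$, and its $X$-degree is at most $n$. The vanishing $F_P = 0$ reads exactly as $Q(P(t)) = 0$, so each $P(t) \in \FF_q[t] \subset K(t)$, for $P$ of sufficiently large degree, is a root of $Q$ in the field $K(t)$. Since distinct monic irreducibles $P$ of $\FF_q[\theta]$ produce distinct polynomials $P(t) \in \FF_q[t]$, $Q$ would have infinitely many roots in the field $K(t)$, contradicting its bounded $X$-degree. The only point requiring care is the passage from a mod-$P(\theta)$ congruence to the identical vanishing of $F_P$, and this is secured cleanly by the uniform bound on $\deg_\theta(a_i)$; everything else is formal.
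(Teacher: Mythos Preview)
Your proof is correct and follows essentially the same route as the paper: assume an algebraic relation with coefficients in $A[t]$, read it component-wise as a congruence modulo $P(\theta)$, use the uniform bound on $\deg_\theta(a_i)$ to upgrade the congruence to an honest identity for primes of large degree, and then derive a contradiction from a nonzero polynomial in $K(t)[X]$ having infinitely many roots $P(t)$. The paper's version is terser at the last step (it simply asserts that for some large $P$ the left-hand side does not vanish), whereas you spell out the auxiliary polynomial $Q(X)$ and the root-counting argument explicitly; one cosmetic slip is that equality in $\AAA_1$ only guarantees the congruence for all but finitely many $P$, not literally every $P$, but this is harmless since your argument only uses primes of large degree.
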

\begin{proof} 
Let us suppose that the statement is false. Then, there exist polynomials $a_0,\ldots,a_N\in A[t]$
such that $a_0a_N\neq0$, and such that for all but finitely many primes $P$, we have 
$$a_0+a_1P(t)+\cdots+a_NP(t)^N=r_PP,$$ for $r_P\in A[t]$. When the degree $d$ of $P$ is big enough,
we have $r_P=0$. Indeed, otherwise, the degree in $\theta$ of the left-hand side would be bounded from above
with the degree of the right-hand side tending to infinity. But then, there exists $P$ a prime such that
$r_P=0$ and such that the left-hand side does not vanish.
\end{proof}

\begin{Remark}{\em 
In particular, we deduce from the previous two results and Theorem \ref{Theorem1} that, for all integers $s\equiv 1\pmod{q-1}$, the finite zeta value $Z_{\AAA}(1;s)$ is not an element of $\iota(K\otimes_{\FF_q}\boldsymbol{F}_s)$.}

{\em The elements $\widehat{\pi}$ and $\widehat{\omega}$ should not be considered as exact analogues of $\widetilde{\pi}$ and $\omega$. To have closer analogues, we should multiply them by appropriate Carlitz $\theta$-torsion points. For example we have, in the convergent product defining the function of Anderson and Thakur $\omega$, the factor $(-\theta)^{\frac{1}{q-1}}$ which is a point of $\theta$-torsion for the Carlitz module.}

\end{Remark}

\section{Final remarks} \label{finalsection}

We come back to the $\QQ$-algebra $\AAA_\QQ$ and to the finite multiple zeta values, which we recall are defined, for $k_1,\ldots,k_r\in\ZZ$, by
$$\zeta_\AAA(k_1,\ldots,k_r)=\left(\sum_{0<n_1<\cdots<n_k<p}\frac{1}{n_1^{k_1}\cdots n_r^{k_r}}\pmod{p}\right)_p\in\AAA_\QQ.$$
Again, no one of these elements is currently known to be non-zero.

These elements have recently been the object of extensive investigation by Kaneko, Kontsevich, Hoffman, Ohno, Zagier, Zhao, et al. A strong motivation is the following conjecture. Let $Z_\AAA$ be the $\QQ$-sub-algebra of $\AAA_\QQ$ generated by the finite multi-zeta values and $Z_\RR$ be $\QQ$-sub-algebra of $\RR[T]$ generated by the ``renormalized multiple zeta values," 
which are polynomials of $\RR[T]$ with $T$ an indeterminate which algebraically represents the divergent series $\sum_{n>0}n^{-1}$ (described in the paper of Ihara, Kaneko and Zagier \cite{IKZ}). We consider the map $\eta:\{\zeta_\AAA(k_1,\ldots,k_r);r\geq 1,k_1,\ldots,k_r>0\}\rightarrow Z_\RR$
defined by 
\begin{equation}\label{eta}
\zeta_\AAA(k_1,\ldots,k_r)\mapsto\sum_{i=0}^r(-1)^{k_{i+1}+\cdots+k_r}\zeta(k_1,\ldots,k_i)\zeta(k_r,\ldots,k_{i+1}).\end{equation}
\begin{Conjecture}[Kaneko-Zagier, \cite{KAN}]\label{conjkanza}
The map $\eta$ induces a $\QQ$-algebra isomorphism
$$Z_\AAA\rightarrow\frac{Z_\RR}{\zeta(2)Z_\RR},$$
with $\zeta(2)=\sum_{n\geq 1}n^{-2}$. 
\end{Conjecture}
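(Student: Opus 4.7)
The plan is to attack this statement --- which is the Kaneko--Zagier Conjecture --- in the standard three stages: show $\eta$ descends to a well-defined $\QQ$-algebra homomorphism on $Z_\AAA$, prove surjectivity, and then injectivity. Steps one and two should be accessible with existing tools; step three is the deep part.

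For well-definedness, writing $\zeta_S(k_1,\ldots,k_r)$ for the image of $\zeta_\AAA(k_1,\ldots,k_r)$ under $\eta$ as given by (\ref{eta}), I would verify that every $\QQ$-linear relation among the $\zeta_\AAA(k_1,\ldots,k_r)$ maps to a relation among the $\zeta_S(k_1,\ldots,k_r)$ modulo $\zeta(2)Z_\RR$. The finite MZVs satisfy stuffle-product relations (coming from decomposing products of sums over $0<n_i<p$) and a reversal-type identity $\zeta_\AAA(k_1,\ldots,k_r)=(-1)^w\zeta_\AAA(k_r,\ldots,k_1)$ with $w=\sum k_i$. Using the regularization machinery of Ihara--Kaneko--Zagier \cite{IKZ}, one checks that the symmetric MZVs $\zeta_S$ satisfy both the shuffle and stuffle relations modulo $\zeta(2)$, as well as the matching reversal symmetry; this should suffice for well-definedness. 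Surjectivity then follows at once from the IKZ result that the symmetric MZVs span $Z_\RR/\zeta(2)Z_\RR$ in every weight.

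The hard part is injectivity. Under Zagier's dimension conjecture, $\dim_\QQ(Z_\RR/\zeta(2)Z_\RR)^{(w)}=d_{w-3}$ where $d_w$ is the Zagier sequence. Proving injectivity would therefore require producing at least $d_{w-3}$ linearly independent elements in $Z_\AAA$ in each weight $w$, whereas, as emphasized in \S\ref{classicalcase}, not a single $\zeta_\AAA(k)$ has yet been shown to be non-zero. Any serious approach must, I expect, pass through a motivic formalism for finite or $p$-adic MZVs and the action of a suitable motivic Galois group, and ultimately confronts the full transcendence content of the conjecture. In short, the first two steps are verification exercises in the IKZ framework, but I see no way to carry out the third without substantial new input.
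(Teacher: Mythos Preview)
The statement you are addressing is a \emph{Conjecture}, not a theorem; the paper neither claims nor provides a proof. It appears in \S\ref{finalsection} purely as motivating background for the Carlitzian analogues studied in the paper. So there is no ``paper's own proof'' to compare your proposal against.

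Your write-up is not a proof either, and to your credit you acknowledge this: you correctly identify injectivity as the genuinely open part and note that not a single finite multiple zeta value is currently known to be non-zero. Your sketch of well-definedness and surjectivity via the Ihara--Kaneko--Zagier regularization framework is a reasonable outline of the folklore picture, though even there one should be careful: showing that $\eta$ is well-defined on $Z_\AAA$ requires knowing that \emph{all} $\QQ$-linear relations among finite MZVs are consequences of the stuffle and reversal relations, which is itself not known. In any case, since the paper offers no argument for Conjecture~\ref{conjkanza}, the appropriate response is simply that there is nothing to compare --- your proposal is a commentary on an open problem, not a candidate proof to be checked against the text.
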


The following identities have been proved by Zhao \cite{ZHA}:
\begin{equation}
\label{zetak1k2}\zeta_\AAA(k_1,k_2)=\left((-1)^{k_1}\binom{k_1+k_2}{k_2}B_{p-k_1-k_2}\pmod{p}\right)_p.
\end{equation}
The identity (\ref{zetak1k2})
holds for all integers $k_1,k_2\geq 1$. We deduce the formula
$$\zeta_\AAA(1,k-1)=(B_{p-k}\pmod{p})_p\in\AAA_\QQ,\quad k>1\text{ odd},$$ where $B_n$ denotes the $n$-th Bernoulli number 
in $\QQ$ (\footnote{For $k=1$ the behavior is slightly different and Zhao's formula (\ref{zetak1k2}) does not hold. Instead, we have the formula $\zeta_\AAA(1,0)=1$ which is trivial. Note that, by the Theorem of Clausen-von Staudt, the denominator of $B_{p-1}$ is divisible by $p$ and we have
$p B_{p-1}\equiv-1\pmod{p}$ for all $p$.}).
We have the following conjecture, naturally related to Conjecture \ref{conjkanza}.

\begin{Conjecture}[Kaneko-Zagier]\label{conjkaza} 
For all odd integers $k\geq 3$, $(B_{p-k}\pmod{p})_p\in\AAA_\QQ$ is non-zero.
\end{Conjecture}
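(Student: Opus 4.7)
My plan is to reduce Conjecture \ref{conjkaza} to a non-vanishing statement about finite multiple zeta values, and then attack it (conditionally) via the Kaneko-Zagier Conjecture \ref{conjkanza}.

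First, applying Zhao's identity (\ref{zetak1k2}) with $k_1 = 1$ and $k_2 = k - 1$ yields
\[
\zeta_\AAA(1, k-1) = -k \cdot (B_{p-k} \pmod p)_p \in \AAA_\QQ.
\]
Since $k \geq 3$ is a fixed odd integer, $k$ is a unit modulo $p$ for every prime $p > k$, so Conjecture \ref{conjkaza} is equivalent to the assertion $\zeta_\AAA(1,k-1) \neq 0$ in $\AAA_\QQ$ (equivalently, in the subalgebra $Z_\AAA$) for each odd $k \geq 3$. This reformulation brings the problem inside the algebra of finite MZVs, where Kaneko-Zagier duality becomes available.

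Granting Conjecture \ref{conjkanza}, it suffices to show that $\eta(\zeta_\AAA(1, k-1))$ does not lie in $\zeta(2) Z_\RR$. Specializing (\ref{eta}) with $r = 2$, $k_1 = 1$, $k_2 = k-1$, and using $(-1)^k = -1$ and $(-1)^{k-1} = 1$ (as $k$ is odd), one obtains
\[
\eta(\zeta_\AAA(1, k-1)) = -\zeta(k-1, 1) + T \cdot \zeta(k-1) + \zeta(1, k-1) \in Z_\RR,
\]
where $\zeta(k-1, 1)$ is divergent in the paper's convention (its last index is $1$) and so must be interpreted via shuffle regularization, producing a polynomial in $T$ whose coefficients are convergent MZVs. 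I would then apply the double shuffle relations to express $\zeta(k-1, 1)$ explicitly, together with Euler's reduction of $\zeta(1, k-1)$ (valid since $k - 1$ is even) as a $\QQ$-linear combination of $\zeta(k)$ and of products $\zeta(j)\zeta(k-j)$ with $2 \leq j \leq k-2$. Having written the right-hand side in such an explicit form, the final step would be to extract the coefficient of a well-chosen basis element of $Z_\RR/\zeta(2)Z_\RR$ in weight $k$, most naturally the primitive Riemann value $\zeta(k)$, and verify that this coefficient is non-zero.

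The principal obstacles are of two distinct kinds. First, Conjecture \ref{conjkanza} is itself open, so any argument along these lines delivers only a conditional result. Second, even granting \ref{conjkanza}, certifying non-vanishing modulo $\zeta(2) Z_\RR$ at arbitrary odd weight $k$ requires linear-independence information for MZVs governed by Zagier's dimension conjecture, which is not available in the needed generality. An unconditional route would instead demand an Iwasawa-theoretic or cyclotomic-class-group analysis bounding the set of primes $p$ with $p \mid B_{p-k}$ and showing that they cannot exhaust all but finitely many primes. This last task seems to be of depth comparable to the classical unsolved problem of the infinitude of regular primes, and I do not see any current technique accomplishing it unconditionally.
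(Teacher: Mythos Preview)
The statement you are addressing is labeled \emph{Conjecture} in the paper, and the paper offers no proof of it whatsoever; it is presented as an open problem, motivated by its relation to Conjecture~\ref{conjkanza} and by the Carlitzian analogue that Angl\`es--Ngo Dac--Tavares Ribeiro have since established. There is therefore no ``paper's own proof'' to compare your proposal against.

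Your write-up is in fact an honest diagnosis rather than a proof: you reduce the question to the non-vanishing of $\zeta_\AAA(1,k-1)$ via Zhao's identity (which is exactly the reduction the paper records just before stating the conjecture), you compute the Kaneko--Zagier image $\eta(\zeta_\AAA(1,k-1))$ correctly, and you then identify precisely why the argument stalls---namely that Conjecture~\ref{conjkanza} is open, and that even granting it one would need linear-independence input of Zagier-dimension type. Your closing remark that an unconditional approach appears comparable in depth to the infinitude of regular primes is the right assessment. In short: your analysis is sound, but it is an explanation of why the conjecture is hard, not a proof, and the paper does not claim otherwise.
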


We now come back to our framework.
The {\it Bernoulli-Carlitz numbers} $BC_{j}$ may be defined in a variety of ways, for example as the coefficients multiplied by a Carlitz factorial of the reciprocal of the Carlitz exponential
\[\left( \sum_{i \geq 0} \frac{1}{D_i} z^{q^i - 1}  \right)^{-1} := \sum_{j \geq 0} \frac{BC_j}{\Pi_j} z^j,\]
as initially discovered by Carlitz \cite{CAR}; note that $BC_j \neq 0$ implies that $j \equiv 0 \pmod{q-1}$. One also has Carlitz's identity from ibid.
\[BC_j = \pitilde^{-j}{\Pi_j \zeta_A(j;0)},\]
which holds for all positive $j \equiv 0 \pmod{q-1}$. These two descriptions indicate a formal analogy with the classical Bernoulli numbers, but the analogy goes much deeper as demonstrated in Taelman's analog of the Herbrand-Ribet Theorem for the Carlitz module \cite{TAEhr} and several generalizations thereafter, e.g. in \cite{APTR}. 

Angl\`es, Ngo Dac and Tavares Ribeiro have informed us that they proved the following striking property, proving the analog to Conjecture \ref{conjkaza} in the Carlitz setting. If $s>1$ and $s\equiv1\pmod{q-1}$, the point $$\left(\operatorname{BC}_{|P|-s}\pmod{P}\right)_{P}\in\AAA_0,$$ not only is non-zero,
but is in fact a unit in $\AAA_0^\times$, hence confirming a conjecture in \cite{ANG&PEL};
their result is available in \cite{ANDTR}. There seems to be no analogue of (\ref{zetak1k2})
involving the Bernoulli-Carlitz fractions. Instead of this, certain congruences of similar flavor, but involving the Bernoulli-Goss polynomials, 
have been obtained in \cite{ANG&DOU} as well as by the first author. 
This suggests that we are 
far away from understanding what could be an analogue of Conjecture \ref{conjkanza}
in the Carlitzian framework, though this was the initial motivation of our investigations.

\end{document}